\theoremstyle{plain}\newtheorem{theo}{Theorem}
\theoremstyle{plain}\newtheorem{cor}{Corollary}
\theoremstyle{definition}\newtheorem{rem}{Remark}
\theoremstyle{plain}\newtheorem{defi}{Definition}[section]
\theoremstyle{plain}\newtheorem{lem}[defi]{Lemma}
\theoremstyle{plain}\newtheorem{prop}[defi]{Proposition}
\theoremstyle{definition}\newtheorem{exa}[defi]{Example}
\newcommand{\N}{{\mathds N}}
\newcommand{\R}{{\mathds R}}
\newcommand{\Z}{{\mathds Z}}
\DeclareMathOperator{\var}{Var}
\DeclareMathOperator{\cov}{Cov}
\begin{document}

\title[Bahadur Representation for $U$-Quantiles]{Bahadur Representation for $U$-Quantiles of Dependent Data}

\author[M. Wendler]{Martin Wendler\\ \normalfont\slshape Ruhr-Universit\"at Bochum}\thanks{Research supported by the {\em Studienstiftung des deutschen Volkes} and the DFG Sonderforschungsbereich 823
{\em Statistik nichtlinearer dynamischer Prozesse}.}

\email{Martin.Wendler@rub.de}

\keywords{Quantiles; $U$-statistics; strong mixing; absolute regularity}

\subjclass[2000]{62G30; 62G20; 62M10}

\begin{abstract}
$U$-quantiles are applied in robust statistics, like the Hodges-Lehmann estimator of location for example. They have been analyzed in the case of independent random variables with the help of a generalized Bahadur representation. Our main aim is to extend these results to $U$-quantiles of strongly mixing random variables and functionals of absolutely regular sequences. We obtain the central limit theorem and the law of the iterated logarithm for $U$-quantiles as straightforward corollaries. Furthermore, we improve the existing result for sample quantiles of mixing data.
\end{abstract}

\maketitle

\section{Introduction}

\subsection{Sample Quantiles}

The Hodges-Lehmann estimator is defined as $H_n=\operatorname{median}\left\{\frac{X_i+X_j}{2}\big|  1\leq i<j\leq n\right\}$ and is an example of a $U$-quantile, i.e. a quantile of the sample $\left(h\left(X_i,X_j\right)\right)_{1\leq i<j\leq n}$, where $h$ is a measurable and symmetric function. $U$-statistics are decomposed into a linear part and a so-called degenerate part, so that the theory for partial sums can be applied to the linear part. Similarly, we first improve the existing results for sample quantiles.  In a second step, we use this to investigate $U$-quantiles.

This article is organized as follows: In the introduction, the definitions and some examples are given, the subsequent section contains the main results. In the third section, some preliminary results are stated and proved, the proofs of the main theorems follow in the last section. Each section is divided into a part about sample quantiles and a part about $U$-quantiles.

Let $\left(X_n\right)_{n\in\N}$ be a stationary sequence of real-valued random variables with distribution function $F$ and $p\in\left(0,1\right)$. Then the $p$-quantile $t_p$ of $F$ is defined as
\begin{equation*}
t_p=F^{-1}\left(p\right):=\inf\left\{t\in\R\big|F\left(t\right)\geq t\right\}
\end{equation*}
and can be estimated by the empirical $p$-quantile, i.e. the $\lceil \frac{n}{p} \rceil$-th order statistic of the sample $X_1\ldots,X_n$. This also can be expressed as the $p$-quantile $F^{-1}_n\left(p\right)$ of the empirical distribution function $F_n\left(t\right):=\frac{1}{n}\sum_{i=1}^{n}\mathds{1}_{X_i\leq t}$. It is clear that $F^{-1}_n\left(p\right)$ is greater than $t_p$ iff $F_n\left(t_p\right)$ is smaller than $p$. In the case of independent random variables, this converse behaviour was exploited by  Bahadur \cite{baha}, who established the representation
\begin{equation}\label{line1}
F^{-1}_n\left(p\right)=t_p+\frac{p-F_n\left(t_p\right)}{f\left(t_p\right)}+R_n
\end{equation}
(where $f=F'$ is the derivative of the distribution function) and showed that $R_n=O\left(n^{-\frac{3}{4}}(\log n)^{\frac{1}{2}}(\log\log n)^{\frac{1}{4}}\right)$. This was refined by Kiefer \cite{kief} to
\begin{equation*}
 \limsup_{n\rightarrow\infty}\left(\frac{n}{2\log\log n}\right)^{\frac{3}{4}}R_n=2^{\frac{1}{2}}3^{-\frac{3}{4}}p^{\frac{1}{4}}(1-p)^{\frac{1}{4}}.
\end{equation*}

The following short calculation shows that $R_n$ is related to the (local) empirical process $\left(F_n\left(t+t_p\right)-F_n\left(t_p\right)-f\left(t_p\right)t\right)_{t}$ centered in $\left(t_p,F_n\left(t_p\right)\right)$ and it's inverse denoted by $Z_n$:
\begin{align*}
 Z_n\left(x\right)&:=\left(F_n\left(\cdot+t_p\right)-F_n\left(t_p\right)\right)^{-1}\left(x\right)-\frac{x}{f\left(t_p\right)}\\
&=\inf\left\{s\big| F_n\left(s+t_p\right)-F_n\left(t_p\right)\leq x\right\}-\frac{x}{f\left(t_p\right)}\displaybreak[0]\\
&=\inf\left\{s\big|F_n\left(s\right)\leq x+F_n\left(t_p\right)\right\}-\frac{x}{f\left(t_p\right)}-t_p\\
&=F_n^{-1}\left(x+F_n\left(t_p\right)\right)-\frac{x}{f\left(t_p\right)}-t_p
\end{align*}
So we have
\begin{equation}
 Z_n\left(p-F_n\left(t_p\right)\right)=F_n^{-1}\left(p\right)-t_p+\frac{F_n\left(t_p\right)-p}{f\left(t_p\right)}=R_n.
\end{equation}

So the first step of our proof is showing that $\left(F_n\left(t+t_p\right)-F_n\left(t_p\right)-f\left(t_p\right)t\right)_{t\in I_n}$ converges to zero at some rate uniformly on intervalls $I_1\supset I_2\supset I_3\ldots$ By a theorem of Vervaat, $-Z_n$ has the same limit behaviour as the (local) empirical process. We will then conclude that $R_n=Z_n\left(F\left(t_p\right)-F_n\left(t_p\right)\right)$ converges to zero at the same rate and obtain the central limit theorem and the law of the iterated logarithm as easy corollaries.

There is a broad literature on the Bahadur representation for dependent data beginning with Sen \cite{sen}, who studied $\phi$-mixing random variables. Babu and Singh \cite{babu} proved such a representation under an exponentially fast decay of the strong mixing coefficients, this was weakened by Yoshihara \cite{yos2} and Sun \cite{sun} to a polynomial decay of the strong mixing coefficients. Hesse \cite{hess}, Wu \cite{wu} and Kulik \cite{kuli} established a Bahadur representation for linear processes. The first aim of this paper is to give better rates than Sun under polynomial strong mixing.

\begin{defi} Let $\left(X_n\right)_{n\in\N}$ be a stationary process. Then the strong mixing coefficients are defined as
\begin{equation}
 \alpha (k) := \sup \left\{\left| P[AB] - P[A]P[B] \right| : A \in \mathcal{F}^k_1, B \in \mathcal{F}^\infty_{n+k}, n \in \N \right\}
\end{equation}
where $\mathcal{F}^l_a$ is the $\sigma$-field generated by random variables $X_a, \ldots, X_l$. We say that $\left(X_n\right)_{n\in\mathds{N}}$ is strongly mixing if $\lim_{k \rightarrow \infty} \alpha (k) = 0$.
\end{defi}
For further information on strong mixing and a detailed description of the other mixing assumptions, see Bradley \cite{brad}. The assumption of strong mixing is very common, but does not cover all relevant classes of processes. For linear processes with discrete innovations or for data from dynamical systems this condition does not hold. Therefore, we will consider functionals of absolutely regular processes:

\begin{defi} Let $\left(X_n\right)_{n\in\N}$ be a stationary process. Then the absolute regularity coefficient is given by
\begin{equation}
\beta (k) = \sup_{n\in\N}E \sup \{ \left| P[A | \mathcal{F}_{-\infty}^n] - P[A]\right| : A \in \mathcal {F}^\infty_{n + k}\},
\end{equation}
and $\left(X_n\right)_{n\in\N}$ is called absolutely regular, if $\beta(k)\rightarrow0$ as $k\rightarrow\infty.$
\end{defi}

We call a sequence $\left(X_n\right)_{n\in\mathds{Z}}$ a two-sided functional of $\left(Z_n\right)_{n\in\mathds{Z}}$ if there is a measurable function defined on $\R^\Z$ such that
\begin{equation}
X_n = f\left( \left(Z_{n + k}\right)_{k \in \Z}\right).
\end{equation}
In addition we will assume that $\left(X_n\right)_{n\in\mathds{Z}}$ satisfies the 1-approximation condition:
\begin{defi} We say that $\left(X_n\right)_{n\in\mathds{Z}}$ is an $1$-approximating functional of $\left(Z_n\right)_{n\in\mathds{Z}}$, if
\begin{equation}
E \left| X_1 - E\left[X_1 \big| \mathcal {F}^l_{- l}\right] \right| \leq a_l \qquad l = 0, 1,2 \ldots
\end{equation}
where $\lim_{l \rightarrow  \infty} a_l = 0$ and $ \mathcal {F}_{-l}^l$ is the $\sigma$-field generated by $Z_{-l}, \ldots, Z_l.$
\end{defi}
This class of dependent sequences covers data from dynamical systems, which are deterministic in the sense that there exists a map $T$ such that $X_{n+1}=T\left(X_n\right)$. For example, the map $T\left(x\right)=\frac{1}{x}-\lfloor\frac{1}{x}\rfloor$ is related to the continued fraction
\begin{equation*}
X_n = f\left( \left(Z_{n + k}\right)_{k \in \N}\right)=\frac{1}{Z_n+\frac{1}{Z_{n+1}+\frac{1}{Z_{n+2}+\ldots}}}
\end{equation*}
where $\left(Z_n\right)_{n\in\N}$ is a stationary, absolutely regular process (even uniformly mixing, see Billingsley \cite{bill}, p. 50) taking values in $\N$ if the distribution of $X_0$ is the Gauss measure given by the density $f\left(x\right)=\frac{1}{\log 2}\frac{1}{1+x}$.

Linear processes (where the innovations are allowed to be discrete and dependent) are also functionals of absolutely regular processes. Let $\left(Z_n\right)_{n\in\mathds{Z}}$ be a stationary, absolutely regular process with $E\left|Z_1\right|<\infty$ and $\left(c_k\right)_{k\in\N}$ a real valued sequence with $\sum_{k=1}^{\infty}\left|c_k\right|<\infty$. Then for $X_n=\sum_{k=1}^{\infty}c_kZ_{n-k}$:
\begin{multline*}
E \left| X_1 - E\left[X_1 \big| \mathcal {F}^l_{- l}\right] \right|=E\left|\sum_{k=l+1}^{\infty}c_{k}\left(Z_{1-k}-E\left[Z_{1-k}\big|\mathcal {F}^l_{- l}\right]\right)\right|\\
\leq\sum_{k=l+1}^{\infty}\left|c_k\right|2E\left|Z_1\right|=:a_l\xrightarrow{l\rightarrow\infty}0.
\end{multline*}

The second aim of this paper is to establish a Bahadur representation for functionals of absolutely regular processes. If $\left(X_n\right)_{n\in\mathds{Z}}$ is an approximating function with constants $\left(a_l\right)_{l\in\N}$, it is not clear that the same holds for $\left(g\left(X_n\right)\right)_{n\in\N}$. We therefore need an additional continuity condition:
\begin{defi}\label{def2} Let $\left(X_n\right)_{n\in\mathds{N}}$ be a stationary process.
\begin{enumerate}
\item A function $g:\R\rightarrow\R$ satisfies the variation condition, if there is a constant $L$ such that
\begin{equation}\label{line6}
 E\left[\sup_{\left\|x-X_0\right\|\leq \epsilon,\ \left\|x'-X_0\right\|\leq \epsilon}\left|g\left(x\right)-g\left(x'\right)\right|\right]\leq L\epsilon.
\end{equation}
 \item A function $g:\R\times\R\rightarrow\R$ satisfies the uniform variation condition on $B\subset\R$, if there is a constant $L$ such that Line (\ref{line6}) holds for all functions $g\left(\cdot,t\right)$, $t\in B$.
 \end{enumerate}
\end{defi}
Obviously, every Lipschitz-continuous function satisfies this condition, but our main example are indicator functions. However, the variation condition can also hold for such discontinuous functions:
\begin{exa}
Let $g\left(x,t\right)=\mathds{1}_{\left\{x\leq t\right\}}$. Then
\begin{equation*}
\sup_{\left\|x-X_0\right\|\leq \epsilon,\ \left\|x'-X_0\right\|\leq \epsilon}\left|g\left(x,t\right)-g\left(x',t\right)\right|=\begin{cases}1 &\text{ if } X_0\in\left(t-\epsilon,t+\epsilon\right]\\0&\text{ else}\end{cases}.
\end{equation*}
Hence
\begin{equation*}
E\left[\sup_{\left\|x-X_0\right\|\leq \epsilon,\ \left\|x'-X_0\right\|\leq \epsilon}\left|g\left(x,t\right)-g\left(x',t\right)\right|\right]\leq F\left(t+\epsilon\right)-F\left(t-\epsilon\right)\leq L\epsilon
\end{equation*}
uniformly on $\R$, if $F$ is Lipschitz-continuous.
\end{exa}

\subsection{$U$-Quantiles}

U-quantiles are applied in robust estimation, for example the Hodges-Lehmann estimator of location. It has a breakdown point of 29\%, that means 29\% of the random variables can be replaced by random variables with different distribution before the estimation breaks down completely (see Huber \cite{hube} for details). It is also very efficient in the case of independent normal distributed random variables.

Let $h:\R\times\R\rightarrow\R$ be a measurable, symmetric function. We are interested in the empirical $U$-quantile, i.e. the $p$-quantile of the sample $\left(h\left(X_i,X_j\right)\right)_{1\leq i<j\leq n}$, which can be expressed by $U^{-1}_n\left(p\right)$ with $U_n\left(t\right):=\frac{2}{n(n-1)}\sum_{1\leq i<j\leq n}\mathds{1}_{h\left(X_i,X_j\right)\leq t}$. Let $U\left(t\right):=P\left[h\left(X,Y\right)\leq t\right]$ ($X$, $Y$ being independent random variables with the same distribution as $X_1$) be differentiable in $U^{-1}\left(p\right)$ with $u\left(U^{-1}\left(p\right)\right):=U'\left(U^{-1}\left(p\right)\right)>0$. Similarly to a sample quantile, $U^{-1}_n\left(p\right)$ can be analyzed with the help of a generalized Bahadur respresentation
\begin{equation}
U^{-1}_n\left(p\right)=U^{-1}\left(p\right)+\frac{U\left(U^{-1}\left(p\right)\right)-U_n\left(U^{-1}\left(p\right)\right)}{u\left(U^{-1}\left(p\right)\right)}+R'_n.
\end{equation}

For the special case of the Hodges-Lehmann estimator of independent data, Geertsema \cite{geer} established a generalized Bahadur representation with $R'_n=O\left(n^{-\frac{3}{4}}\log n\right)$ a.s.. For general $U$-quantiles, Dehling, Denker, Philipp \cite{deh3} and Choudhury and Serfling \cite{chou} improved the rate to $R'_n=O\left(n^{-\frac{3}{4}}(\log n)^{\frac{3}{4}}\right)$. Arcones \cite{arco} proved the exact order $R'_n=O\left(n^{-\frac{3}{4}}(\log\log n)^{\frac{3}{4}}\right)$ as for sample quantiles. We use a slightly more general definition:
\begin{defi}\label{def1}
We call a nonnegative, measurable function $h:\R\times\R\times\R\rightarrow\R$, which is symmetric in the first two arguments and nondecreasing in the third argument, a kernel function. For fixed $t\in\R$, we call
\begin{equation}
 U_n\left(t\right):=\frac{2}{n(n-1)}\sum_{1\leq i<j\leq n}h\left(X_i,X_j,t\right)
\end{equation}
the $U$-statistic with kernel $h\left(\cdot,\cdot,t\right)$ and the process $\left(U_n\left(t\right)\right)_{t\in\R}$ the empirical $U$-distribution function. We define the $U$-distribution function as $U\left(t\right):=E\left[h\left(X,Y,t\right)\right]$, where $X$, $Y$ are independent with the same distribution as $X_1$.

$U_n^{-1}(p)$ is called empirical $p$-$U$-quantile.
\end{defi}
In order to prove asymptotic normality, Hoeffding \cite{hoef} decomposed $U$-statistics into a linear and a so-called degenerate part:
\begin{equation}
U_n\left(t\right)=U\left(t\right)+\frac{2}{n}\sum_{i=1}^{n}h_{1}\left(X_{i},t\right)+\frac{2}{n\left(n-1\right)}\sum_{1\leq i<j\leq n}h_{2}\left(X_{i},X_{j},t\right)
\end{equation}
where
\begin{align*}
h_1(x,t)&:=Eh(x,Y,t)-U\left(t\right) \\
h_2(x,y,t)&:=h(x,y,t) - h_1(x,t) -h_1(y,t) -U\left(t\right).
\end{align*}

$U$-statistics and $U$-processes have been investigated not only for independent data, but also for different classes of dependent data: Sen \cite{sen2} considered $\star$-mixing observations, Yoshihara \cite{yosh} studied absolutely regular observations, Denker and Keller \cite{denk} functionals of absolutely regular processes. Borovkova, Burton, Dehling \cite{boro} extended this to $U$-processes. Hsing, Wu \cite{hsin} investigated $U$-statistics for some class of causal processes and Dehling, Wendler \cite{dehl}, \cite{deh2} for strongly mixing oberservations. As far as we know there are no results on $U$-quantiles of dependent data, our third and main aim is to give a rate of convergence of the remainder term in the Bahahdur-representation of $U$-quantiles for strongly mixing sequences and for functionals of absolutely regular sequences. The central limit theorem and the law of the iterated logarithm for $U$-quantiles are straightforward corollaries.

Similar to sample quantiles, we need special continuity assumptions on the kernel:
\begin{defi}\label{def3} Let $\left(X_n\right)_{n\in\mathds{N}}$ be a stationary process and $t\in\R$.
\begin{enumerate}
\item The kernel $h$ satisfies the variation condition for $t\in\R$, if there is a constant $L$ such that
\begin{equation}\label{line8}
 E\left[\sup_{\left\|(x,y)-(X,Y)\right\|\leq \epsilon,\ \left\|(x',y')-(X,Y)\right\|\leq \epsilon}\left|h\left(x,y,t\right)-h\left(x',y',t\right)\right|\right]\leq L\epsilon,
\end{equation}
where $X$, $Y$ are independent with the same distribution as $X_1$ and $\left\|(x_1,x_2)\right\|=(x_1^2+x_2^2)^{1/2}$ denotes the Euclidean norm.
 \item The kernel $h$ satisfies the uniform variation condition on $B\subset\R$, if there is a constant $L$ such that Line (\ref{line8}) holds for all $t\in B$.
 \end{enumerate}
\end{defi}

\begin{exa}[Hodges-Lehmann estimator] Let $h\left(x,y,t\right)=\mathds{1}_{\left\{\frac{1}{2}\left(x+y\right)\leq t\right\}}$. The 0.5-$U$-quantil is the Hodges-Lehmann estimator for location \cite{hodg}. Note that
\begin{equation*}
\sup_{\substack{\left\|(x,y)-(X,Y)\right\|\leq \epsilon\\ \left\|(x',y')-(X,Y)\right\|\leq \epsilon}}\left|\mathds{1}_{\left\{\frac{1}{2}\left(x+y\right)\leq t\right\}}-\mathds{1}_{\left\{\frac{1}{2}\left(x'+y'\right)\leq t\right\}}\right|=\begin{cases}1 &\text{ if } \frac{X+Y}{2}\in\left(t-\frac{\epsilon}{\sqrt{2}},t+\frac{\epsilon}{\sqrt{2}}\right]\\0&\text{ else}\end{cases}
\end{equation*}
If $X_1$ has a bounded density, then the density $f_{\frac{1}{2}\left(X+Y\right)}$ of $\frac{1}{2}\left(X+Y\right)$ is also bounded, so
\begin{multline*}
 E\left[\sup_{\left\|(x,y)-(X,Y)\right\|\leq \epsilon,\ \left\|(x',y')-(X,Y)\right\|\leq \epsilon}\left|h\left(x,y\right)-h\left(x',y'\right)\right|\right]\\
\leq P\left[\frac{X+Y}{2}\in\left(t-\frac{\epsilon}{\sqrt{2}},t+\frac{\epsilon}{\sqrt{2}}\right]\right]\leq \left(\sqrt{2}\sup_{x\in\R}f_{\frac{1}{2}\left(X+Y\right)}(x)\right)\cdot\epsilon
\end{multline*}
and $\mathds{1}_{\left\{\frac{1}{2}\left(x+y\right)\leq t\right\}}$ satisfies the uniform variation condition on $\R$.
\end{exa}

\begin{exa}[$Q_n$ estimator of scale]
Let $h\left(x,y,t\right)=\mathds{1}_{\left\{\left|x-y\right|\leq t\right\}}$. When the 0.25-$U$-quantile is the $Q_n$ estimator of scale proposed by Rousseeuw and Croux \cite{rous}. If $X_1$ has a bounded density, then with similar arguments as for the Hodges-Lehmann-estimator, $\mathds{1}_{\left\{\left|x-y\right|\leq t\right\}}$ satisfies the uniform variation condition.
\end{exa}

\section{Main results}

\subsection{Sample Quantiles}

In the following theorems we assume that $\left(X_n\right)_{n\in\N}$ is a stationary process.
\begin{theo}\label{theo1}Let $g:\R\times\R\rightarrow\R$ be a nonnegative, bounded, measurable function which is nondecreasing in the second argument, let $F\left(t\right):=E\left[g\left(X_1,t\right)\right]$ be differentiable in $t_p\in\R$ with $F'\left(t_p\right)=f\left(t_p\right)>0$ and
\begin{equation}\label{line9}
 \left|F\left(t\right)-F\left(t_p\right)-f\left(t_p\right)\left(t-t_p\right)\right|=o\left(\left|t-t_p\right|^{\frac{3}{2}}\right)\ \ \ \text{as}\ \ t\rightarrow t_p.
\end{equation}
Assume that one of the following two conditions holds:
\begin{enumerate}
\item $\left(X_n\right)_{n\in\N}$ is strongly mixing with $\alpha\left(n\right)=O\left(n^{-\beta}\right)$ for some $\beta\geq3$. Let $\gamma:=\frac{\beta-2}{\beta}$.
\item $\left(X_n\right)_{n\in\N}$ is an $1$-approximating functional of an absolutely regular process $\left(Z_n\right)_{n\in\mathds{Z}}$ with mixing coefficients $\left(\beta(n)\right)_{n\in\N}$ and approximation constants $\left(a_n\right)_{n\in\N}$, such that $\beta(n)=\left(n^{-\beta}\right)$ and $a_n=\left(n^{-(\beta+3)}\right)$ for some $\beta>3$. Let $g$ satisfy the variation condition uniformly in some neighbourhood of $t_p$ and let $\gamma:=\frac{\beta-3}{\beta+1}$.
\end{enumerate}
Then for $F_n\left(t\right):=\frac{1}{n}\sum_{i=1}^{n}g\left(X_i,t\right)$, $p=F\left(t_p\right)$ and any constant $C>0$
\begin{align}\label{line10}
 \sup_{\left|t-t_p\right|\leq C\sqrt{\frac{\log\log n}{n}}}\left|F_n\left(t\right)-F\left(t\right)-F_n\left(t_p\right)+F\left(t_p\right)\right|=o\left(n^{-\frac{5}{8}-\frac{1}{8}\gamma}(\log n)^{\frac{3}{4}}(\log\log n)^{\frac{1}{2}}\right)\\
\label{line11}R_n:=F^{-1}_n\left(p\right)-t_p+\frac{F\left(t_p\right)-F_n\left(t_p\right)}{f\left(t_p\right)}=o\left(n^{-\frac{5}{8}-\frac{1}{8}\gamma}(\log n)^{\frac{3}{4}}(\log\log n)^{\frac{1}{2}}\right)
\end{align}
a.s. as $n\rightarrow\infty$.
\end{theo}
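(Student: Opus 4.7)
The plan is to first establish the uniform oscillation bound (\ref{line10}) on the centered local empirical process, and then deduce the Bahadur remainder (\ref{line11}) via the Vervaat-type inversion argument already outlined in the introduction. Set
\begin{equation*}
S_n(t) := \sum_{i=1}^n \bigl\{g(X_i,t) - g(X_i,t_p) - F(t) + F(t_p)\bigr\},
\end{equation*}
so that the quantity in (\ref{line10}) equals $\sup_{t\in I_n} |S_n(t)|/n$ with $I_n := [t_p - C\sqrt{\log\log n/n},\ t_p + C\sqrt{\log\log n/n}]$.

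First I would discretize $I_n$ by a grid of spacing $\epsilon_n = n^{-a}$ to be tuned. Since $g(X_i,\cdot)$ is nondecreasing and $F$ has a bounded derivative near $t_p$, a monotonicity sandwich controls $\sup_{t\in I_n}|S_n(t)|/n$ by the maximum of $|S_n(t_{n,j})|/n$ over the $O(\epsilon_n^{-1}\sqrt{\log\log n/n})$ grid points, up to a deterministic error of order $\epsilon_n$. To control $S_n(t_{n,j})$ at a single grid point I would derive a Rosenthal-type $2m$-th moment inequality that is sensitive to the smallness of $\operatorname{Var}(g(X_1,t)-g(X_1,t_p))$: because $g$ is bounded and $g(X_i,\cdot)$ is monotone, this variance is of order $|t-t_p|$, and under strong mixing a Yokoyama / Doukhan-Louhichi-type inequality should give
\begin{equation*}
E|S_n(t)|^{2m} \leq C_m (n|t-t_p|)^m + \text{lower order},
\end{equation*}
provided $m$ is taken of order $\beta$. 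Under the functional-of-absolutely-regular assumption, the variation condition transfers the $1$-approximation property from $(X_i)$ to $(g(X_i,t))$, uniformly in $t$ in a neighbourhood of $t_p$, with the same constants up to a factor of $L$, so an analogous moment bound in the Borovkova-Burton-Dehling framework applies. A standard Markov plus Borel-Cantelli argument along the geometric subsequence $n_k = 2^k$, with $m$ and $\epsilon_n$ tuned as functions of $\beta$, then yields the almost sure rate claimed in (\ref{line10}); the appearance of $\gamma = (\beta-2)/\beta$ (resp.\ $(\beta-3)/(\beta+1)$) is precisely what comes out of balancing the grid error $\epsilon_n$ against the mixing-constrained moment order.

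For (\ref{line11}), I would invoke the identity
\begin{equation*}
R_n = Z_n\bigl(p - F_n(t_p)\bigr), \qquad Z_n(x) := F_n^{-1}\bigl(x + F_n(t_p)\bigr) - \frac{x}{f(t_p)} - t_p,
\end{equation*}
already derived in the introduction, together with the LIL $|F_n(t_p) - F(t_p)| = O(\sqrt{\log\log n/n})$ for partial sums of bounded functionals (valid under either mixing assumption), which puts the argument of $Z_n$ inside $I_n$ almost surely for large $n$. Vervaat's inversion lemma then transfers the uniform rate in (\ref{line10}) from the centered local empirical process to $Z_n$, and assumption (\ref{line9}) absorbs the nonlinear part of $F$ near $t_p$ at the negligible order $(\log\log n/n)^{3/4}$, which gives (\ref{line11}) with the same rate as (\ref{line10}).

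The main obstacle I expect is the moment bound in the discretization step: extracting the exact exponent $5/8 + \gamma/8$ requires a $2m$-th moment inequality that scales linearly in the variance of the summands, together with a delicate simultaneous tuning of $\epsilon_n$, $m$, and the mixing rate $\beta$. The functional-of-absolutely-regular case is the more delicate of the two, because one must first approximate $g(X_i,t)$ by a function measurable with respect to the block $\sigma$-field $\mathcal{F}_{i-l}^{i+l}$ via the variation condition (with error of order $La_l$), after which a $\beta$-mixing moment inequality for block-approximating random variables can be applied; the polynomial decay of $\beta(n)$ and of $a_n$ must be fast enough to absorb the combinatorial cost of a $(2m)$-th moment computation of such a partial sum, which is why the sharper hypothesis $a_n = O(n^{-(\beta+3)})$ is imposed in case (ii).
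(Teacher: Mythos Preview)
Your overall architecture matches the paper's: discretize the shrinking interval, control the centered local empirical process at grid points via a moment bound plus Borel--Cantelli along dyadic blocks $2^l\le n<2^{l+1}$, then deduce (\ref{line11}) from (\ref{line10}) by Vervaat's inversion theorem combined with the LIL for $F_n(t_p)-p$. The second half of your plan is essentially identical to the paper's argument.

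The gap is in the moment inequality. The bound you propose, $E|S_n(t)|^{2m}\le C_m(n|t-t_p|)^m$, is the i.i.d.\ Rosenthal scaling; if it held for some $m\ge 2$ it would already yield the i.i.d.\ Bahadur--Kiefer rate $n^{-3/4}$ (that is, $\gamma=1$), strictly better than what the theorem asserts. Under polynomial mixing $\alpha(n)=O(n^{-\beta})$ this scaling is not available; the loss from mixing does not manifest as a cap on the admissible $m$ but as a degradation of the exponent on the small quantity $E|g(X_1,t)-g(X_1,t_p)|\asymp|t-t_p|$. The paper works with the \emph{fourth} moment only and establishes (Lemmas~\ref{lem4} and~\ref{lem5})
\[
E\Bigl(\sum_{i=1}^n\bigl\{g(X_i,t)-g(X_i,t_p)-F(t)+F(t_p)\bigr\}\Bigr)^4\le C\,n^2(\log n)^2\bigl(E|g(X_1,t)-g(X_1,t_p)|\bigr)^{1+\gamma},
\]
where the exponent $1+\gamma$ (not $2$) comes from applying Davydov's covariance inequality with H\"older indices $p_1=p_2=\tfrac{2\beta}{\beta-1}$, $p_3=\beta$ on the second-order terms and $p_1=p_2=\tfrac{2\beta}{\beta-3}$, $p_3=\tfrac{\beta}{3}$ on the fourth-order ones. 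It is this degraded exponent, not a balance between grid width and moment order, that produces the $\gamma$ in the rate $c_n=n^{-5/8-\gamma/8}(\log n)^{3/4}(\log\log n)^{1/2}$.

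Two further points where the paper's execution differs from your sketch: the grid spacing is fixed at $d_n=(\log\log n/n)^{3/4}$, chosen so that the deterministic discretization error $F(t_p+(k{+}1)d_n)-F(t_p+kd_n)=f(t_p)d_n+o((\log\log n/n)^{3/4})$ is already $o(c_n)$ via (\ref{line9}); and the maximum over both $n\in[2^l,2^{l+1})$ and the $O((2^l\log l)^{1/4})$ grid indices is handled not by a union bound but by M\'oricz's maximal inequality for multi-indexed partial sums, which is what makes a fourth-moment bound sufficient.
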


\begin{rem}\label{rem1} Bahadur representations for sample quantiles of strongly mixing data have previously been established by Yoshihara \cite{yos2} and Sun \cite{sun}. Yoshihara states the rate $R_n=o\left(n^{-\frac{3}{4}}\log n\right)$ a.s., but a careful reading shows that there is a mistake in Line (20) of his paper, which has to be
\begin{equation*}
E\left|\sum_{j=1}^n\sum_{i=1}^l\zeta_j\left(\theta+(i-1)q_k,\theta+iq_k\right)\right|^4\leq n^2(lq_k)^{1+\gamma}.
\end{equation*}
His proof leads to our rate with $\gamma\leq\frac{1}{5}$ instead of our $\gamma=\frac{\beta-2}{\beta}\in\left[\frac{1}{3},1\right)$. Sun assumes a faster decay of the mixing coefficients, namely $\beta>10$, and obtains the rate $R_n=o\left(n^{-\frac{3}{4}+\delta}\log n\right)$ for any $\delta>\frac{11}{4(\beta+1)}$.
\end{rem}

\begin{rem}\label{rem2} Our condition in Line (\ref{line9}) is fullfilled if $F$ is twice differentiable in $t_p$. This is weaker than $F$ being twice differentiable in a neighbourhood of $t_p$ as required by Bahadur \cite{baha}, Yoshihara \cite{yos2} and Sun \cite{sun}. 
\end{rem}

\begin{cor}\label{cor1} Under the assumptions of Theorem \ref{theo1} it holds that
\begin{equation}
 \sqrt{n}\left(F_n^{-1}\left(p\right)-t_p\right)\xrightarrow{\mathcal{D}}N\left(0,\sigma^2\right)
\end{equation}
where
\begin{equation*}
\sigma^2=\frac{1}{f^2(t_p)}\left(\var\left[g\left(X_1,t_p\right)\right]+2\sum_{k=2}^\infty\cov\left[g\left(X_1,t_p\right),g\left(X_k,t_p\right)\right]\right).
\end{equation*}
Under Condition 1. a.s.
\begin{equation}
\limsup_{n\rightarrow\infty}\pm\sqrt{\frac{n}{\log\log n}}\left(F_n^{-1}\left(p\right)-t_p\right)=\sqrt{2\sigma^2}.
\end{equation}
Under Condition 2., the sequence $\sqrt{\frac{n}{\log\log n}}\left(F_n^{-1}\left(p\right)-t_p\right)$ is a.s. bounded.
\end{cor}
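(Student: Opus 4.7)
The plan is to derive both statements from the Bahadur representation in Theorem~\ref{theo1} by reducing them to classical limit theorems for the partial sum $S_n := \sum_{i=1}^n (g(X_i,t_p) - F(t_p))$. From line~(\ref{line11}) we have
\[
F_n^{-1}(p) - t_p \;=\; \frac{F_n(t_p) - F(t_p)}{f(t_p)} + R_n \;=\; \frac{S_n}{n\,f(t_p)} + R_n,
\]
with $R_n = o\bigl(n^{-5/8-\gamma/8}(\log n)^{3/4}(\log\log n)^{1/2}\bigr)$ a.s., and $\gamma>0$ in both cases.

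For the central limit theorem, multiplying by $\sqrt{n}$ yields a remainder of order $o\bigl(n^{-1/8-\gamma/8}(\log n)^{3/4}(\log\log n)^{1/2}\bigr) = o(1)$ a.s., so by Slutsky's theorem it suffices to show $\sqrt{n}\,S_n/(n f(t_p)) \xrightarrow{\mathcal{D}} N(0,\sigma^2)$. Under Condition~1, $g(\cdot,t_p)$ is bounded, the mixing rate $\alpha(n)=O(n^{-\beta})$ with $\beta\ge 3$ is more than enough to ensure absolute summability of the covariances $\cov[g(X_1,t_p),g(X_k,t_p)]$, and Ibragimov's CLT for bounded strongly mixing sequences gives the result with the stated $\sigma^2$. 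Under Condition~2, the variation condition on $g(\cdot,t_p)$ combined with the $1$-approximating property of $(X_n)_{n\in\mathds{N}}$ shows that $\bigl(g(X_n,t_p)\bigr)_{n\in\mathds{N}}$ is itself a bounded $1$-approximating functional of $(Z_n)_{n\in\mathds{Z}}$ with approximation constants $L a_n$; the CLT of Borovkova--Burton--Dehling for such functionals then yields the same conclusion.

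For the law of the iterated logarithm, multiplication by $\sqrt{n/\log\log n}$ turns the remainder into $o\bigl(n^{-1/8-\gamma/8}(\log n)^{3/4}\bigr) = o(1)$ a.s., so the asymptotics of $\sqrt{n/\log\log n}\,(F_n^{-1}(p)-t_p)$ coincide with those of $S_n/(f(t_p)\sqrt{n\log\log n})$. Under Condition~1, the LIL for bounded strongly mixing sequences (Oodaira--Yoshihara; see also Rio) gives $\limsup_{n\to\infty}\pm S_n/\sqrt{2n\log\log n}=\tau$ a.s., where $\tau^2 = f^2(t_p)\,\sigma^2$, and dividing by $f(t_p)$ produces the constant $\sqrt{2\sigma^2}$. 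Under Condition~2, the corresponding compact LIL for partial sums of $1$-approximating functionals of absolutely regular processes only yields almost sure boundedness of $S_n/\sqrt{n\log\log n}$ under the present polynomial assumptions, hence only the weaker boundedness statement is claimed.

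The routine work lies in checking that the cited CLT and LIL theorems apply under exactly the mixing/approximation rates assumed (in particular, that $\beta\ge 3$ with bounded summands in Condition~1 and $\beta>3$ with $a_n=O(n^{-(\beta+3)})$ in Condition~2 satisfy the hypotheses in the literature); the one substantive point is the verification that $(g(X_n,t_p))_n$ inherits the approximating-functional structure from $(X_n)$ via the variation condition, which is where I expect the bookkeeping to require the most care.
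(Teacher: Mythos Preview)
Your approach is essentially identical to the paper's: both derive the corollary directly from the Bahadur representation of Theorem~\ref{theo1} together with the CLT for $F_n(t_p)$ (Ibragimov under Condition~1, Borovkova--Burton--Dehling under Condition~2) and the LIL (Rio under Condition~1, the paper's own Proposition~\ref{theo3b} under Condition~2). Two small slips worth fixing: the Bahadur expansion gives $F_n^{-1}(p)-t_p=\frac{F(t_p)-F_n(t_p)}{f(t_p)}+R_n=-S_n/(nf(t_p))+R_n$, not $+S_n/(nf(t_p))$ (harmless here by symmetry), and under Condition~2 the inherited approximation constants for $\bigl(g(X_n,t_p)\bigr)_n$ are $(L+K)\sqrt{a_n}$ (see Lemma~\ref{lem6}), not $La_n$---though $\sqrt{a_n}=O(n^{-(\beta+3)/2})$ still decays fast enough for the cited results to apply.
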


\begin{proof} This Corollary follows directly by the central limit theorem for $F_n\left(t_p\right)$ (Theorem 1.4 of Ibragimov \cite{ibra}, Theorem 4 of Borovkova et al. \cite{boro}) respectively the law of the iterated logarithm (Theorem 3 of Rio \cite{rio}, Proposition \ref{theo3b}), the Bahadur representation (\ref{line1}) and Line (\ref{line11}).
\end{proof}

\subsection{$U$-Quantiles}

\begin{theo}\label{theo2}Let $h:\R\times\R\times\R\rightarrow\R$ be a bounded kernel function that satisfies the uniform variation condition in some neighbourhood of $t_p$. Let $U\left(t\right):=E\left[h\left(X,Y,t\right)\right]$ be differentiable in $t_p\in\R$ with $U'\left(t_p\right)=u\left(t_p\right)>0$ and
\begin{equation}\label{line15}
 \left|U\left(t\right)-U\left(t_p\right)-u\left(t_p\right)\left(t-t_p\right)\right|=o\left(\left|t-t_p\right|^{\frac{3}{2}}\right)\ \ \ \text{as}\ \ t\rightarrow t_p.
\end{equation}
Assume that one of the following two conditions holds:
\begin{enumerate}
\item $\left\|X_n\right\|_1<\infty$ and $\left(X_n\right)_{n\in\N}$ is strongly mixing and the mixing coefficients satisfy $\alpha\left(n\right)=O\left(n^{-\beta}\right)$ for some $\beta\geq\frac{13}{4}$. Le $\gamma:=\frac{\beta-2}{\beta}$.
\item $\left(X_n\right)_{n\in\N}$ is an $1$-approximating functional of an absolutely regular process $\left(Z_n\right)_{n\in\mathds{Z}}$ with mixing coefficients $\left(\beta(n)\right)_{n\in\N}$ and approximation constants $\left(a_n\right)_{n\in\N}$, such that $\beta(n)=\left(n^{-\beta}\right)$ and $a_n=\left(n^{-(\beta+3)}\right)$ for some $\beta>3$. Let $\gamma:=\frac{\beta-3}{\beta+1}$.
\end{enumerate}
Then for $U_n\left(t\right):=\frac{2}{n(n-1)}\sum_{1\leq i<j\leq n}h\left(X_i,X_j,t\right)$, $p=U\left(t_p\right)$ and any constant $C>0$
\begin{align}
 \label{line16}\sup_{\left|t-t_p\right|\leq C\sqrt{\frac{\log\log n}{n}}}\left|U_n\left(t\right)-U\left(t\right)-U_n\left(t_p\right)+U\left(t_p\right)\right|=o\left(n^{-\frac{5}{8}-\frac{1}{8}\gamma}(\log n)^{\frac{3}{4}}(\log\log n)^{\frac{1}{2}}\right)\\
\label{line17}R'_n:=U^{-1}_n\left(p\right)-t_p+\frac{U\left(t_p\right)-U_n\left(t_p\right)}{u\left(t_p\right)}=o\left(n^{-\frac{5}{8}-\frac{1}{8}\gamma}(\log n)^{\frac{3}{4}}(\log\log n)^{\frac{1}{2}}\right)
\end{align}
a.s. as $n\rightarrow\infty$.
\end{theo}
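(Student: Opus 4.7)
The plan is to reduce the statement to Theorem \ref{theo1} via Hoeffding's decomposition
\[U_n(t)-U(t) \;=\; \frac{2}{n}\sum_{i=1}^{n} h_1(X_i,t) \;+\; D_n(t), \qquad D_n(t):=\frac{2}{n(n-1)}\sum_{1\le i<j\le n}h_2(X_i,X_j,t),\]
and to control the linear and the degenerate part separately on $I_n:=[t_p-C\sqrt{\log\log n/n},\,t_p+C\sqrt{\log\log n/n}]$. Once (\ref{line16}) is established, (\ref{line17}) follows exactly as in the sample-quantile case: $U_n$ is nondecreasing, so Vervaat's inversion lemma shows that the local inverse process has the same modulus of continuity at zero as the centred local $U$-distribution function, and the law of the iterated logarithm for $U_n(t_p)$ (Borovkova, Burton, Dehling \cite{boro}; Dehling, Wendler \cite{dehl}) yields $|p-U_n(t_p)|=O(\sqrt{\log\log n/n})$ a.s., placing $R'_n$ inside the range where (\ref{line16}) applies.

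\textbf{Reducing the linear part.} I would set $g(x,t):=E[h(x,Y,t)]$ with $Y$ an independent copy of $X_1$. Then $g$ is nonnegative, bounded, measurable and nondecreasing in $t$, and $F(t):=E[g(X_1,t)]=U(t)$ satisfies (\ref{line9}). Under Condition 2, $g$ inherits the variation condition of Definition \ref{def2} from the uniform variation condition on $h$: evaluating the inner sup of Definition \ref{def3} at $y=y'=Y$ forces $\|(x,Y)-(X_0,Y)\|=\|x-X_0\|$, so Jensen's inequality for $|\cdot|$ together with Fubini gives
\[E_{X_0}\sup_{\|x-X_0\|,\|x'-X_0\|\le\epsilon}|g(x,t)-g(x',t)|\le E_{X_0,Y}\sup_{\|(x,y)-(X_0,Y)\|,\|(x',y')-(X_0,Y)\|\le\epsilon}|h(x,y,t)-h(x',y',t)|\le L\epsilon.\]
Theorem \ref{theo1} then applies to $g$ and $F_n(t):=\frac{1}{n}\sum_i g(X_i,t)$, which yields the required rate for the oscillation of the linear part, since $\frac{2}{n}\sum_i h_1(X_i,t)=2(F_n(t)-F(t))$.

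\textbf{Controlling the degenerate part (the main obstacle).} The remaining task is
\[\sup_{t\in I_n}|D_n(t)-D_n(t_p)|=o\!\left(n^{-5/8-\gamma/8}(\log n)^{3/4}(\log\log n)^{1/2}\right)\quad\text{a.s.}\]
The plan has three steps. First, from the monotone envelope
\[|h_2(x,y,t)-h_2(x,y,t')|\le|h(x,y,t)-h(x,y,t')|+|g(x,t)-g(x,t')|+|g(y,t)-g(y,t')|+|U(t)-U(t')|\]
and the uniform variation condition on $h$, I will establish a fourth-moment increment bound of the form $E[(V_n(t)-V_n(t'))^4]\le C\,n^{2+\gamma'}|t-t'|$ for $V_n:=n(n-1)D_n/2$, with $\gamma'$ depending on $\beta$; under Condition 1 this comes from splitting the fourfold sum into blocks and applying the strong-mixing covariance inequality to pairs of conditionally centred factors, while under Condition 2 it uses coupling and the approximation constants $a_n=O(n^{-\beta-3})$ as in Borovkova--Burton--Dehling \cite{boro}. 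Second, I will discretise $I_n$ into a polynomial grid of mesh $n^{-\kappa}$ and combine Markov's inequality with Borel--Cantelli to control $D_n$ at each grid point. Third, the oscillation between adjacent grid points is absorbed by the same monotone envelope together with the Lipschitz behaviour of $U$. The delicate step is the calibration of $\kappa$ and $\gamma'$ so that the final rate matches that of Theorem \ref{theo1}; it is this calibration that forces the hypothesis $\beta\ge\tfrac{13}{4}$ under Condition 1, strengthening the $\beta\ge 3$ condition sufficient for Theorem \ref{theo1}.
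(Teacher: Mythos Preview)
Your overall architecture matches the paper's: Hoeffding decomposition, Theorem \ref{theo1} applied to $g(x,t)=Eh(x,Y,t)$ for the linear part (your verification of the variation condition for $g$ is exactly Lemma \ref{lem9}), and Vervaat's lemma together with the LIL-type bound for $U_n(t_p)$ (Propositions \ref{theo4}, \ref{theo4b}) to pass from (\ref{line16}) to (\ref{line17}). So the reduction to the degenerate part is correct and is what the paper does.

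Where you diverge is in handling $D_n$, and the plan as written has a gap. The proposed bound $E[(V_n(t)-V_n(t'))^4]\le Cn^{2+\gamma'}|t-t'|$ cannot hold with a useful $\gamma'$: already in the i.i.d.\ case $V_n(t)-V_n(t')$ is a degenerate $U$-statistic with variance of exact order $n^2\,E[(h_2(X,Y,t)-h_2(X,Y,t'))^2]$, so its fourth moment is at least of order $n^4|t-t'|^2$, not $n^{2+\gamma'}|t-t'|$. More to the point, the paper does not try to gain a factor $|t-t'|$ in the degenerate part at all. Instead it uses that the degenerate part is \emph{uniformly} small: the key tool is the covariance inequality for degenerate kernels (Lemmas \ref{lem7} and \ref{lem8}), which gives
\[
\bigl|E[h_2(X_{i_1},X_{i_2},s)\,h_2(X_{i_3},X_{i_4},s')]\bigr|\le C\,\alpha^{1/2}(m),\qquad m=\max\{i_{(2)}-i_{(1)},\,i_{(4)}-i_{(3)}\},
\]
uniformly in $s,s'$ near $t_p$. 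With this, a \emph{second}-moment bound on $Q_n(t_p+kd_n)-Q_n(t_p)$, the crude inequality $E(\max_i|Y_i|)^2\le\sum_iEY_i^2$ applied over both the grid in $t$ and a dyadic decomposition in $n$, and Borel--Cantelli already suffice. The restriction $\beta\ge\tfrac{13}{4}$ then emerges from balancing $n^2\sum_{i\le n} i\,\alpha^{1/2}(i)\sim n^{4-\beta/2}$ against the grid cardinality $(n/\log\log n)^{1/4}$ and the target rate $c_n$; you should carry out this calibration explicitly rather than leave it as a remark.
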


\begin{cor}\label{cor2} Under the assumptions of Theorem \ref{theo2} it holds that
\begin{equation}
 \sqrt{n}\left(U_n^{-1}\left(p\right)-t_p\right)\xrightarrow{\mathcal{D}}N\left(0,\sigma^2\right)
\end{equation}
with
\begin{equation*}
\sigma^2=\frac{1}{f^2(t_p)}\left(\var\left[h_1\left(X_1,t_p\right)\right]+2\sum_{k=2}^\infty\cov\left[h_1\left(X_1,t_p\right),h_1\left(X_k,t_p\right)\right]\right).
\end{equation*}
Under Condition 1. a.s.
\begin{equation}
\limsup_{n\rightarrow\infty}\pm\sqrt{\frac{n}{\log\log n}}\left(U_n^{-1}\left(p\right)-t_p\right)=\sqrt{2\sigma^2}.
\end{equation}
Under Condition 2., the sequence $\sqrt{\frac{n}{\log\log n}}\left(U_n^{-1}\left(p\right)-t_p\right)$ is bounded a.s.
\end{cor}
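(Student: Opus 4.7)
The plan mirrors that of Corollary \ref{cor1}: reduce the statements about $U_n^{-1}(p)$ to the corresponding statements about the $U$-statistic $U_n(t_p)$ via the Bahadur representation of Theorem \ref{theo2}. From Line (\ref{line17}),
\begin{equation*}
\sqrt{n}\bigl(U_n^{-1}(p)-t_p\bigr)=\frac{\sqrt{n}\bigl(U_n(t_p)-U(t_p)\bigr)}{u(t_p)}+\sqrt{n}\,R'_n.
\end{equation*}
Under both conditions $\gamma>0$, hence $\sqrt{n}\,R'_n=o\bigl(n^{-1/8-\gamma/8}(\log n)^{3/4}(\log\log n)^{1/2}\bigr)\to 0$ almost surely, and similarly $\sqrt{n/\log\log n}\,R'_n\to 0$ almost surely. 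It therefore suffices to establish the CLT and the LIL-type statement for $U_n(t_p)-U(t_p)$ and to carry the factor $1/u(t_p)$ through.

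The second step is Hoeffding's decomposition, splitting $U_n(t_p)-U(t_p)$ into the linear part $\tfrac{2}{n}\sum_{i=1}^{n}h_1(X_i,t_p)$ and the degenerate part $\tfrac{2}{n(n-1)}\sum_{1\leq i<j\leq n}h_2(X_i,X_j,t_p)$. The linear part is a partial sum of a bounded stationary sequence inheriting the dependence structure of $(X_n)$: under Condition~1 it is strongly mixing with the same rate, so Theorem~1.4 of Ibragimov \cite{ibra} gives the CLT and Theorem~3 of Rio \cite{rio} gives the LIL; under Condition~2, $h_1(\cdot,t_p)$ inherits the variation condition from $h$, so $(h_1(X_i,t_p))$ is a $1$-approximating functional of $(Z_n)$ with summable approximation constants, Theorem~4 of Borovkova, Burton, Dehling \cite{boro} gives the CLT, and Proposition~\ref{theo3b} gives the a.s.\ boundedness of $\sqrt{n/\log\log n}$ times the linear part. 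The degenerate part is shown to be negligible at both the $n^{-1/2}$ scale and the $\sqrt{\log\log n/n}$ scale using the moment bounds for degenerate $U$-statistics of mixing data (Dehling--Wendler \cite{dehl}, \cite{deh2} under Condition~1; Borovkova--Burton--Dehling \cite{boro} under Condition~2), combined with a dyadic subsequence argument ($n_k=2^k$) and Borel--Cantelli for the almost sure control.

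The main obstacle is the almost sure control of the degenerate part at the LIL scale, since the cited CLT results for $U$-statistics typically give only in-probability negligibility of the degenerate term. This gap is bridged by fourth-moment estimates for degenerate $U$-statistics of mixing data together with a maximal inequality on the dyadic blocks $(n_k,n_{k+1}]$, which yields $\tfrac{2}{n(n-1)}\sum_{i<j} h_2(X_i,X_j,t_p)=o_{a.s.}\bigl(\sqrt{\log\log n/n}\bigr)$. Combining the CLT/LIL for the linear part, the negligibility of the degenerate part, and the Bahadur reduction above then yields the stated asymptotic normality and LIL/boundedness for $U_n^{-1}(p)-t_p$, the form of $\sigma^2$ being the standard long-run variance of $h_1(X_1,t_p)$ transported through the factor $1/u(t_p)$.
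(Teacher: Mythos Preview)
Your proposal is correct and takes essentially the same approach as the paper: the paper's proof simply cites Proposition~\ref{theo5} for the CLT and Propositions~\ref{theo4}, \ref{theo4b} for the LIL/boundedness of $U_n(t_p)$, and those propositions are themselves proved by exactly the Hoeffding decomposition plus linear-part/degenerate-part argument you outline. The only cosmetic differences are that the paper packages the $U$-statistic limit theorems as separate propositions rather than inlining them, and that it controls the degenerate part via second-moment bounds (Lemmas~\ref{lem7}, \ref{lem8}) rather than fourth moments.
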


\begin{proof} This Corollary is an easy consequence of Line (\ref{line17}) and Proposition \ref{theo5} respectively Proposition \ref{theo4} or \ref{theo4b}.

\end{proof}

\section{Preliminary results}

\subsection{Sample Quantiles}

In this section, we recall some existing lemmas for handy reference and prove some technical results. In the proofs, $C$ denotes an arbitrary constant, which may have different values from line to line and may depend on several other values, but not on $n\in\N$. An important tool in the analysis of weakly dependent random variables are covariance inequalities:

\begin{lem}[Davydov \cite{davy}]\label{lem1} If $Y_1$ and $Y_2$ are random variables such that $Y_1$ is measurable with resprect to $\mathcal{F}_1^k$ and $Y_2$ with respect to $\mathcal{F}_{k+n}^\infty$ for some $k\in\N$, then
\begin{equation*}
 \left|E\left[Y_1Y_2\right]-E\left[Y_1\right]E\left[Y_2\right]\right|\leq 10\left\|Y_1\right\|_{p_1}\left\|Y_2\right\|_{p_2}\alpha^{\frac{1}{p_3}}\left(n\right)
\end{equation*}
for all $p_1,p_2,p_3\in\left[0,1\right]$ with $\frac{1}{p_1}+\frac{1}{p_2}+\frac{1}{p_3}=1$.
\end{lem}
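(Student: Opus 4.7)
The plan is to prove Davydov's inequality in two stages: first the bounded case, then a truncation argument that interpolates between $L^\infty$ and $L^{p_i}$.

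First I would handle the case $p_1=p_2=\infty$, $p_3=1$, i.e.\ I would show that whenever $|Y_1|\leq M_1$ and $|Y_2|\leq M_2$, one has $|E[Y_1Y_2]-E[Y_1]E[Y_2]|\leq 4M_1M_2\alpha(n)$. For the special case $Y_1=\mathbf{1}_A$, $Y_2=\mathbf{1}_B$ with $A\in\mathcal{F}_1^k$ and $B\in\mathcal{F}_{k+n}^\infty$, this is essentially the definition of $\alpha(n)$. I would then pass to simple functions by linearity (picking up the constant $4$ from the positive/negative parts), and to general bounded measurable functions by an $L^1$-density argument. A slicker alternative I would keep in reserve is the identity $E[Y_1Y_2]-E[Y_1]E[Y_2]=E\bigl[Y_1\bigl(E[Y_2\mid\mathcal{F}_1^k]-E[Y_2]\bigr)\bigr]$ together with the Hahn decomposition bound $\|E[Y_2\mid\mathcal{F}_1^k]-E[Y_2]\|_1\leq 2M_2\alpha(n)$.

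Next I would set up the truncation. For thresholds $T_1,T_2>0$ write $Y_i=Y_i'+Y_i''$ with $Y_i':=Y_i\mathbf{1}_{\{|Y_i|\leq T_i\}}$ and $Y_i'':=Y_i\mathbf{1}_{\{|Y_i|>T_i\}}$, both measurable with respect to the same $\sigma$-field as $Y_i$. Expanding gives four covariances: the bounded piece $\cov(Y_1',Y_2')$, two mixed pieces, and the tail piece $\cov(Y_1'',Y_2'')$. The bounded piece is controlled by Stage~1 with $M_i=T_i$, yielding at most $4T_1T_2\alpha(n)$. The remaining three pieces I would bound by Hölder's inequality, using Markov-type estimates such as $\|Y_i''\|_1\leq \|Y_i\|_{p_i}^{p_i}/T_i^{p_i-1}$ and $\|Y_i'\|_{p_i}\leq \|Y_i\|_{p_i}$. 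With some care, each of them is at most a constant times $\|Y_1\|_{p_1}\|Y_2\|_{p_2}$ divided by an appropriate power of $T_1$ and/or $T_2$.

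Finally I would optimize the truncation levels. The natural choice, which balances the bounded contribution $T_1T_2\alpha(n)$ against the tail contributions, is $T_i=\|Y_i\|_{p_i}\alpha(n)^{-1/p_i}$, for then $T_1T_2\alpha(n)=\|Y_1\|_{p_1}\|Y_2\|_{p_2}\alpha(n)^{1-1/p_1-1/p_2}=\|Y_1\|_{p_1}\|Y_2\|_{p_2}\alpha(n)^{1/p_3}$, and the tail/mixed terms give the same order by construction. Summing the four pieces with explicit bookkeeping of the constants yields the factor $10$. The main obstacle I expect is precisely this bookkeeping: getting the Hölder exponents in the three mixed/tail covariances to fit simultaneously with the constraint $1/p_1+1/p_2+1/p_3=1$, and verifying that the Stage~1 constant combined with the truncation losses does not exceed $10$. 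Standard limiting arguments (to cover the boundary cases $p_i=1$ or $p_i=\infty$) complete the proof.
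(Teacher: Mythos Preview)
The paper does not prove this lemma at all; it is quoted verbatim as a result of Davydov \cite{davy} and used as a black box in the proofs of Lemmas~\ref{lem4} and below. So there is nothing to compare your argument against on the paper's side.

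That said, your outline is the standard proof of Davydov's inequality and is correct in substance: the bounded case gives a constant $4$, and truncation at $T_i=\|Y_i\|_{p_i}\alpha(n)^{-1/p_i}$ together with H\"older on the tail pieces yields the right exponent $\alpha(n)^{1/p_3}$. The only point to watch is the explicit constant. Different bookkeepings in the literature produce $8$, $10$ or $12$; to land exactly on $10$ you will want to bound the three mixed/tail covariances by $2\|Y_i''\|_1\cdot T_j$ (rather than using two H\"older applications), so that each contributes $2\|Y_1\|_{p_1}\|Y_2\|_{p_2}\alpha(n)^{1/p_3}$, and add the $4$ from the bounded part. Also note the obvious typo in the statement: the condition should be $p_1,p_2,p_3\in[1,\infty]$, not $[0,1]$.
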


\begin{lem}[Borovkova et al. \cite{boro}]\label{lem2} Let $\left(X_n\right)_{n\in\N}$ be an $1$-approximating functional with approximation constants $\left(a_l\right)_{l\in\N}$ of an absolutely regular process $\left(Z_n\right)_{n\in\N}$ and $\left\|X_0\right\|_{2+\delta}<\infty$ for some $\delta>0$. Then
\begin{equation*}
 \left|E\left[X_iX_{i+k}\right]-\left(EX_i\right)\left(EX_k\right)\right|\leq2\left\|X_0\right\|_{2+\delta}^{2}\left(\beta\left(\lfloor \frac{k}{3}\rfloor\right)\right)^{\frac{\delta}{2+\delta}}+4\left\|X_0\right\|_{2+\delta}^{\frac{2+\delta}{1+\delta}}a_{\lfloor \frac{k}{3}\rfloor}^{\frac{\delta}{1+\delta}}.
\end{equation*}
\end{lem}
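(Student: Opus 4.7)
The plan is the standard approximate-and-couple strategy for functionals of absolutely regular sequences. Setting $l := \lfloor k/3\rfloor$, I would introduce the approximants $\tilde{X}_i := E[X_i\mid\mathcal{F}_{i-l}^{i+l}]$ and $\tilde{X}_{i+k} := E[X_{i+k}\mid\mathcal{F}_{i+k-l}^{i+k+l}]$, where the $\sigma$-fields are those generated by blocks of the underlying process $(Z_n)$. The $1$-approximation hypothesis gives $E|X_j-\tilde{X}_j|\leq a_l$ for $j\in\{i,i+k\}$, and conditional Jensen gives $\|\tilde{X}_j\|_{2+\delta}\leq\|X_0\|_{2+\delta}$. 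Since $k-2l\geq l$, the two index blocks $\{j-l,\ldots,j+l\}$ are separated by at least $l$ indices, which is what allows $\beta(l)$ to enter.

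First I would bound $\cov(\tilde{X}_i,\tilde{X}_{i+k})$. A Berbee-type coupling produces $\tilde{X}_{i+k}^{*}$ independent of $\tilde{X}_i$, equidistributed with $\tilde{X}_{i+k}$, and satisfying $P[\tilde{X}_{i+k}\neq\tilde{X}_{i+k}^{*}]\leq\beta(l)$. Rewriting $\cov(\tilde{X}_i,\tilde{X}_{i+k})=E[\tilde{X}_i(\tilde{X}_{i+k}-\tilde{X}_{i+k}^{*})]$ and applying the three-factor H\"older inequality with exponents $p=q=2+\delta$ and $r=(2+\delta)/\delta$ to the indicator factor $\mathds{1}\{\tilde{X}_{i+k}\neq\tilde{X}_{i+k}^{*}\}$ yields the first term $2\|X_0\|_{2+\delta}^{2}\beta(l)^{\delta/(2+\delta)}$.

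Next I would control the approximation error by expanding $\cov(X_i,X_{i+k})-\cov(\tilde{X}_i,\tilde{X}_{i+k})$ into four pieces of the form $E[YZ]$ with $\|Y\|_1\leq a_l$, $\|Y\|_{2+\delta}\leq 2\|X_0\|_{2+\delta}$ (from triangle inequality and conditional Jensen), and $\|Z\|_{2+\delta}\leq\|X_0\|_{2+\delta}$. Log-convexity of $L^p$-norms with exponent $r=(2+\delta)/(1+\delta)$ gives $\|Y\|_r\leq\|Y\|_1^{\delta/(1+\delta)}\|Y\|_{2+\delta}^{1/(1+\delta)}$; this $r$ has H\"older conjugate $s=2+\delta$, so H\"older's inequality delivers a bound of the form $\|X_0\|_{2+\delta}^{(2+\delta)/(1+\delta)}a_l^{\delta/(1+\delta)}$ from each of the four pieces, summing to the second stated term.

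The main obstacle is not the analytic core but the bookkeeping of the constants $2$ and $4$: one must verify that the interpolation exponent $(2+\delta)/(1+\delta)$ pairs with the H\"older conjugate $s=2+\delta$ so that each factor $\|X_0\|_{2+\delta}$ appears with the correct cumulative power, and one must exploit that the two \emph{product-of-means} error terms admit a cheaper direct bound than the full interpolation route, so that the overall constant in the second term does not exceed $4$.
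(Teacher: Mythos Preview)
The paper does not prove this lemma; it is quoted from Borovkova, Burton, Dehling \cite{boro} without proof. Your outline is the standard argument used there and is correct. One small simplification: because $E[\tilde{X}_j]=E[X_j]$ by the tower property, the covariance difference $\cov(X_i,X_{i+k})-\cov(\tilde{X}_i,\tilde{X}_{i+k})$ reduces to $E[X_iX_{i+k}]-E[\tilde{X}_i\tilde{X}_{i+k}]$, so you only get \emph{two} product pieces rather than four, and the constant $4$ then comes out directly as $2\cdot 2^{1/(1+\delta)}\le 4$ without any separate handling of the product-of-means terms.
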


\begin{lem}[Borovkova et al. \cite{boro}]\label{lem3} Let $\left(X_n\right)_{n\in\N}$ be a bounded $1$-approximation functional with approximation constants $\left(a_l\right)_{l\in\N}$ of an absolutely regular process $\left(Z_n\right)_{n\in\N}$. Then
\begin{multline*}
 \left|E\left[X_iX_jX_kX_l\right]-E\left[X_i\right]E\left[X_jX_kX_l\right]\right|\\
\leq\left(6\left\|X_0\right\|_{2+\delta}^{2}\left(\beta\left(\lfloor \frac{j-i}{3}\rfloor\right)\right)^{\frac{\delta}{2+\delta}}+8\left\|X_0\right\|_{2+\delta}^{\frac{2+\delta}{1+\delta}}a_{\lfloor \frac{j-i}{3}\rfloor}^{\frac{\delta}{1+\delta}}\right)\left\|X_0\right\|_{\infty}^{2}
\end{multline*}
and
\begin{multline*}
 \left|E\left[X_iX_jX_kX_l\right]-E\left[X_iX_j\right]E\left[X_kX_l\right]\right|\\
\leq\left(6\left\|X_0\right\|_{2+\delta}^{2}\left(\beta\left(\lfloor \frac{k-j}{3}\rfloor\right)\right)^{\frac{\delta}{2+\delta}}+8\left\|X_0\right\|_{2+\delta}^{\frac{2+\delta}{1+\delta}}a_{\lfloor \frac{k-j}{3}\rfloor}^{\frac{\delta}{1+\delta}}\right)\left\|X_0\right\|_{\infty}^{2}.
\end{multline*}
\end{lem}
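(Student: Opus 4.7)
Since the lemma is attributed to Borovkova, Burton, and Dehling \cite{boro}, the plan is to adapt the two-fold covariance bound of Lemma \ref{lem2} to a product of four bounded factors, following the same local-approximation-plus-Davydov template. I would treat the first inequality in detail; the second follows by the identical argument after shifting the splitting point from the $i$--$j$ gap to the $j$--$k$ gap, so that $\{X_i,X_j\}$ becomes the ``isolated'' block and $\{X_k,X_l\}$ the ``bounded'' block.

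First, I would approximate each factor by a functional of a finite block of the driving process: set $m := \lfloor (j-i)/3 \rfloor$ and $\tilde X_\nu := E[X_\nu \mid \mathcal{F}^{\nu+m}_{\nu-m}]$ for $\nu \in \{i,j,k,l\}$. The $1$-approximation property gives $E|X_\nu - \tilde X_\nu| \leq a_m$, boundedness gives $\|\tilde X_\nu\|_\infty \leq \|X_0\|_\infty$, and by construction the $\sigma$-fields generated by $\tilde X_i$ and by $\tilde X_j \tilde X_k \tilde X_l$ on $(Z_n)$ are separated by a gap of at least $m$.

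Second, I would telescope $X_i X_j X_k X_l - \tilde X_i \tilde X_j \tilde X_k \tilde X_l$ one factor at a time. Each substitution leaves a difference $X_\nu - \tilde X_\nu$ multiplied by three bounded companion factors; the key is to arrange Hölder's inequality so that two of the three companions are absorbed at $\|X_0\|_\infty$ (yielding the overall factor $\|X_0\|_\infty^2$ in the final bound), while the remaining pair is controlled by interpolating between the $L^1$-bound $E|X_\nu - \tilde X_\nu| \leq a_m$ and the trivial $L^{2+\delta}$-bound $\|X_\nu - \tilde X_\nu\|_{2+\delta} \leq 2\|X_0\|_{2+\delta}$. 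Summing the four telescoping substitutions, and running the same argument on $E[X_i]\,E[X_j X_k X_l]$, produces a contribution of the form $8 \|X_0\|_{2+\delta}^{(2+\delta)/(1+\delta)} \|X_0\|_\infty^2 \, a_m^{\delta/(1+\delta)}$.

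Third, I would apply Davydov's covariance inequality (Lemma \ref{lem1}) to $\tilde X_i$ and $\tilde X_j \tilde X_k \tilde X_l$, viewed as functionals of disjoint blocks of the absolutely regular process $(Z_n)$, using $\alpha \leq \beta$ and the exponents $p_1 = p_2 = 2+\delta$, $p_3 = (2+\delta)/\delta$. Generalized Hölder gives $\|\tilde X_j \tilde X_k \tilde X_l\|_{2+\delta} \leq \|X_0\|_\infty^2 \|X_0\|_{2+\delta}$, and Davydov then delivers a bound of order $\|X_0\|_{2+\delta}^2 \|X_0\|_\infty^2 \, \beta(m)^{\delta/(2+\delta)}$, contributing the constant $6$ after combining with the replacement error from the previous step. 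Adding the two contributions gives the claimed inequality. The main obstacle I anticipate is the exponent bookkeeping in the telescoping step, ensuring that the powers $\delta/(1+\delta)$ on $a_m$ and $(2+\delta)/(1+\delta)$ on $\|X_0\|_{2+\delta}$ come out in precisely the stated form; the Davydov step itself is routine once the local approximations are in hand.
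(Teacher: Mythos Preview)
The paper does not actually prove this lemma: it is quoted verbatim from Borovkova, Burton, and Dehling \cite{boro} and used as a black box, with no argument given. Your sketch is a faithful reconstruction of the standard proof (and of what \cite{boro} do): local approximation by $\tilde X_\nu=E[X_\nu\mid\mathcal F_{\nu-m}^{\nu+m}]$, a telescoping replacement bounded via H\"older and interpolation between the $L^1$ bound $a_m$ and the $L^{2+\delta}$ bound, and finally Davydov's inequality on the approximants using $\alpha\leq\beta$. The only point that deserves a second look is the bookkeeping of the numerical constants $6$ and $8$---the structure of your argument yields the correct powers $a_m^{\delta/(1+\delta)}$, $\|X_0\|_{2+\delta}^{(2+\delta)/(1+\delta)}$, and $\beta(m)^{\delta/(2+\delta)}$, but whether the leading constants come out exactly as stated depends on which version of Davydov's inequality one uses and on some minor choices in the telescoping; since the lemma is used in the paper only up to an unspecified constant $C$, this is immaterial.
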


In the analysis of empirical processes, fourth moment inequalities are often used:

\begin{lem}\label{lem4} Let $\left(X_n\right)_{n\in\N}$ be a stationary, strongly mixing sequence with $\alpha\left(n\right)=O\left(n^{-\beta}\right)$ for some $\beta>3$ and $C_1,C_2>0$ constants. Then there exists a constant $C$, such that for all measurable, nonnegative functions $g:\R\rightarrow\R$ bounded by $C_1$ and with $E\left|g\left(X_1\right)-Eg\left(X_1\right)\right|\geq C_2n^{-\frac{\beta}{\beta+1}}$ and all $n\in\N$
\begin{equation*}
 E\left(\sum_{i=1}^{n}g\left(X_i\right)-E\left[g\left(X_1\right)\right]\right)^4\leq C n^2\left(\log n\right)^2\left(E\left|g\left(X_1\right)\right|\right)^{1+\gamma}
\end{equation*}
with $\gamma=\frac{\beta-2}{\beta}$.
\end{lem}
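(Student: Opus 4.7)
The plan is, writing $Y_i:=g(X_i)-Eg(X_1)$ so that $EY_i=0$, $|Y_i|\leq C_1$, $EY_i^2\leq C_1E|Y_i|$, and setting $p:=E|g(X_1)|$, to expand
$$E\Bigl(\sum_{i=1}^{n}Y_i\Bigr)^{4}=\sum_{i,j,k,l=1}^{n}E[Y_iY_jY_kY_l]$$
and, by symmetry, reduce to ordered tuples $i\leq j\leq k\leq l$ with gaps $a=j-i$, $b=k-j$, $c=l-k$. Two ingredients will be used repeatedly: the trivial estimate $|E[Y_iY_jY_kY_l]|\leq Cp$ (from boundedness of $g$), and the interpolation $\|Y_i\|_q\leq\|Y_i\|_\infty^{1-1/q}\|Y_i\|_1^{1/q}\leq Cp^{1/q}$ for $q\geq1$, together with the analogous bound for products $Y_jY_k$ and $Y_jY_kY_l$.

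The strategy is to split according to which gap is largest. If $b$ is largest, write $E[Y_iY_jY_kY_l]=E[Y_iY_j]E[Y_kY_l]+\cov(Y_iY_j,Y_kY_l)$ and apply Davydov's inequality (Lemma~\ref{lem1}) to the covariance at gap $b$; if $a$ (resp.\ $c$) is largest, use $EY_i=0$ (resp.\ $EY_l=0$) to rewrite the full expectation as a covariance at gap $a$ (resp.\ $c$) and apply Lemma~\ref{lem1} in the same way. Choosing the Davydov exponents so that $1/p_3=3/\beta$ (which requires $\beta\geq 3$) and inserting the interpolation gives in each case $|\cov(\cdot,\cdot)|\leq Cp^{1-3/\beta}q^{-3}$, where $q$ is the largest gap. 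The number of ordered 4-tuples with largest gap equal to $q$ is at most $Cnq^{2}$, and using the trivial bound $Cp$ for $q\leq q_0:=p^{-1/\beta}$ together with the interpolated Davydov bound for $q>q_0$ yields, after a logarithmic sum $\sum_{q>q_0}q^{-1}\leq C\log n$, a total contribution of $Cnp^{1-3/\beta}\log n$ per case.

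For the ``main'' part $\sum E[Y_iY_j]E[Y_kY_l]$ (appearing only in the case $b$ largest), dropping the inner ordering constraint bounds it by $\bigl(\sum_{i\leq j\leq n}|E[Y_iY_j]|\bigr)^{2}$. An analogous truncated Davydov estimate on pairwise covariances (now with $1/p_3=2/\beta$ and truncation at $k_0=p^{-1/\beta}$) gives $\sum_{k\geq 0}|E[Y_0Y_k]|\leq Cp^{1-1/\beta}$, hence $\sum_{i\leq j\leq n}|E[Y_iY_j]|\leq Cnp^{1-1/\beta}$, and the main part is $\leq Cn^{2}p^{2-2/\beta}=Cn^{2}p^{1+\gamma}$. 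The covariance contribution $Cnp^{1-3/\beta}\log n$ is absorbed into $Cn^{2}(\log n)^{2}p^{1+\gamma}$ exactly when $p^{-(1+1/\beta)}\leq Cn\log n$, which is the content of the hypothesis $p\geq C_{2}n^{-\beta/(\beta+1)}$. Tuples with coinciding indices (e.g.\ $i=j$) are handled analogously by writing $E[Y_i^{2}Y_kY_l]=EY_i^{2}\cdot E[Y_kY_l]+\cov(Y_i^{2},Y_kY_l)$ and are of strictly lower order.

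The main obstacle is calibrating the Davydov interpolation so that the final exponent of $p$ comes out exactly as $1+\gamma=2-2/\beta$: one has to choose different Davydov exponents for the pairwise and four-wise covariance estimates, balance the trivial and Davydov bounds at the truncation point $q_0=p^{-1/\beta}$, and verify that the resulting crossover term is absorbed under the standing hypothesis on $E|g(X_1)|$---the precise threshold $n^{-\beta/(\beta+1)}$ being dictated by exactly this balance.
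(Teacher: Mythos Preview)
Your proof is correct and follows essentially the same strategy as the paper: both expand the fourth moment, reduce to ordered indices and split by the largest gap, apply Davydov's inequality (Lemma~\ref{lem1}) with $1/p_3=3/\beta$ to the four-wise covariance pieces and a sharper exponent to the pairwise covariances, and absorb the resulting $np^{1-3/\beta}\log n$ term into $n^{2}(\log n)^{2}p^{1+\gamma}$ via the hypothesis $E|g(X_1)|\geq C_2 n^{-\beta/(\beta+1)}$. Your truncation at $q_0=p^{-1/\beta}$ together with the pairwise choice $1/p_3=2/\beta$ is a minor bookkeeping variant of the paper's direct choice $1/p_3=1/\beta$ (which is what generates the two $\log n$ factors there), but the argument is otherwise identical.
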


\begin{proof}
We define the random variables $Y_i=g\left(X_i\right)-Eg\left(X_1\right)$. Using Lemma \ref{lem1} with $p_1=p_2=\frac{2\beta}{\beta-3}$ and $p_3=\frac{\beta}{3}$ we obtain the following three inequalities for all $i,j,k\in\N$:
\begin{align*}
 \left|E\left[Y_0Y_iY_{i+j}Y_{i+j+k}\right]\right|&\leq C\alpha^{\frac{3}{\beta}}\left(i\right)\left\|Y_0\right\|_{\frac{2\beta}{\beta-3}}\left\|Y_0Y_jY_{j+k}\right\|_{\frac{2\beta}{\beta-3}},\\
 \left|E\left[Y_0Y_iY_{i+j}Y_{i+j+k}\right]\right|&\leq C\left|E\left[Y_0Y_i\right]\right|\left|E\left[Y_0Y_k\right]\right|+ C\alpha^{\frac{3}{\beta}}\left(j\right)\left\|Y_0Y_i\right\|_{\frac{2\beta}{\beta-3}}\left\|Y_0Y_k\right\|_{\frac{2\beta}{\beta-3}},\\
 \left|E\left[Y_0Y_iY_{i+j}Y_{i+j+k}\right]\right|&\leq C\alpha^{\frac{3}{\beta}}\left(k\right)\left\|Y_0Y_iY_{i+j}\right\|_{\frac{2\beta}{\beta-3}}\left\|Y_0\right\|_{\frac{2\beta}{\beta-3}}.\\
\end{align*}
By the same lemma with $p_1=p_2=\frac{2\beta}{\beta-1}$ and $p_3=\beta$, we get
\begin{equation*}
 \left|E\left[Y_0Y_i\right]\right|\leq C\alpha^{\frac{1}{\beta}}\left(i\right)\left\|Y_1\right\|^2_{\frac{2\beta}{\beta-1}}.
\end{equation*}
As $Y_n$ is bounded, we have that $\left\|Y_0\right\|_{\frac{2\beta}{\beta-3}}\leq C\left(E\left|Y_1\right|\right)^{\frac{\beta-3}{2\beta}}$, $\left\|Y_0Y_jY_{j+k}\right\|_{\frac{2\beta}{\beta-3}}\leq C\left(E\left|Y_1\right|\right)^{\frac{\beta-3}{2\beta}}$, $\left\|Y_1\right\|_{\frac{2\beta}{\beta-1}}\leq C\left(E\left|Y_1\right|\right)^{\frac{\beta-1}{2\beta}}$ and it follows that
\begin{equation*}
 \left|E\left[Y_0Y_iY_{i+j}Y_{i+j+k}\right]\right|\leq C\alpha^{\frac{1}{\beta}}\left(i\right)\alpha^{\frac{1}{\beta}}\left(k\right)\left(E\left|Y_1\right|\right)^{\frac{2\beta-2}{\beta}}+ C\alpha^{\frac{3}{\beta}}\left(\max\left\{i,j,k\right\}\right)\left(E\left|Y_1\right|\right)^{\frac{\beta-3}{\beta}}.
\end{equation*}
Now by stationarity it is
\begin{multline*}
 E\left(\sum_{i=1}^{n}Y_i\right)^4\leq Cn\sum_{i,j,k=1}^{n}\left|E\left[Y_0Y_iY_{i+k}Y_{i+k+j}\right]\right|\\
\leq Cn^2\sum_{i=1}^{n}\alpha^{\frac{1}{\beta}}\left(i\right)\sum_{k=1}^{n}\alpha^{\frac{1}{\beta}}\left(k\right)\left(E\left|Y_1\right|\right)^{\frac{2\beta-2}{\beta}}
+Cn\sum_{i=1}^{n}i^2\alpha^{\frac{3}{\beta}}\left(i\right)\left(E\left|Y_1\right|\right)^{\frac{\beta-3}{\beta}}.
\end{multline*}
As $E\left|g\left(X_1\right)\right|\geq C_2n^{-\frac{\beta}{\beta+1}}$, we have that $\left(E\left|Y_1\right|\right)^{\frac{\beta-3}{\beta}}\leq Cn\left(E\left|Y_1\right|\right)^{\frac{2\beta-2}{\beta}}$ and with $\alpha\left(n\right)=O\left(n^{-\beta}\right)$, we arrive at
\begin{multline*}
E\left(\sum_{i=1}^{n}Y_i\right)^4\leq Cn^2\sum_{i=1}^{n}\frac{1}{i}\sum_{k=1}^{n}\frac{1}{k}\left(E\left|Y_1\right|\right)^{\frac{2\beta-2}{\beta}}
+Cn^2\sum_{i=1}^{n}i^2\frac{1}{i^3}\left(E\left|Y_1\right|\right)^{\frac{2\beta-2}{\beta}}\\
\leq C n^2\left(\log n\right)^2\left(E\left|Y_1\right|\right)^{\frac{2\beta-2}{\beta}}=C n^2\left(\log n\right)^2\left(E\left|Y_1\right|\right)^{1+\gamma}.
\end{multline*}
\end{proof}

If $(X_n)_{n\in\N}$ is an $1$-approximating functional and $g$ an arbitrary function, it is not clear that the same holds for $(g(X_n))_{n\in\N}$, so we give the following lemma:

\begin{lem}\label{lem6} Let $\left(X_n\right)_{n\in\N}$ be an $1$-approximating functional of an absolutely regular process $\left(Z_n\right)_{n\in\Z}$ with approximation constants $\left(a_n\right)_{n\in\N}$ and let $g$ be a function bounded by $K$ and satisfy the variation condition with constant $L$. Then $\left(g\left(X_n\right)\right)_{n\in\N}$ is an $1$-approximating functional with approximation constants $\left((L+K)\sqrt{a_n}\right)_{n\in\N}$. 
\end{lem}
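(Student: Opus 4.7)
My plan is to approximate $g(X_1)$ not by the conditional expectation directly but by $g$ applied to the conditional expectation $X_1^{(l)} := E[X_1\mid\mathcal{F}_{-l}^l]$, for which the 1-approximation hypothesis gives $E|X_1 - X_1^{(l)}| \leq a_l$. The first step is a reduction: since $g(X_1^{(l)})$ is $\mathcal{F}_{-l}^l$-measurable, the triangle inequality together with the $L^1$-contractivity of conditional expectation yields
\begin{equation*}
E\bigl|g(X_1) - E[g(X_1)\mid\mathcal{F}_{-l}^l]\bigr| \leq 2\, E\bigl|g(X_1) - g(X_1^{(l)})\bigr|,
\end{equation*}
so it suffices to control the right-hand side.

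Next, for $\epsilon>0$ to be chosen, I would split the expectation according to the event $A_\epsilon:=\{|X_1 - X_1^{(l)}|\leq\epsilon\}$. On $A_\epsilon$ both $X_1$ and $X_1^{(l)}$ lie within $\epsilon$ of $X_1$, so
\begin{equation*}
\bigl|g(X_1) - g(X_1^{(l)})\bigr|\mathds{1}_{A_\epsilon} \leq \sup_{\|x-X_1\|\leq\epsilon,\ \|x'-X_1\|\leq\epsilon}\bigl|g(x) - g(x')\bigr|.
\end{equation*}
By stationarity, $X_1$ has the same law as $X_0$, so the variation condition (\ref{line6}) gives that the expectation of the right-hand side is at most $L\epsilon$. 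On the complementary event I would use boundedness of $g$ together with Markov's inequality:
\begin{equation*}
E\bigl[|g(X_1)-g(X_1^{(l)})|\mathds{1}_{A_\epsilon^c}\bigr] \leq 2K\,P\bigl(|X_1 - X_1^{(l)}|>\epsilon\bigr) \leq \frac{2Ka_l}{\epsilon}.
\end{equation*}

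Finally, optimizing by choosing $\epsilon = \sqrt{a_l}$ produces a bound of order $(L+K)\sqrt{a_l}$ (up to a harmless constant factor), which is the claimed approximation constant. The only subtle point — and the one I would flag as the main obstacle — is the correct invocation of the variation condition: one must notice that although (\ref{line6}) is centered at $X_0$, its conclusion is a distributional statement and therefore transfers verbatim to the reference point $X_1$ by stationarity, which is precisely what is needed to estimate the approximation error for $g(X_1)$.
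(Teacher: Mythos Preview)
Your argument is correct and follows essentially the same route as the paper: approximate $g(X_1)$ by $g\bigl(E[X_1\mid\mathcal{F}_{-l}^l]\bigr)$, split according to whether $|X_1-E[X_1\mid\mathcal{F}_{-l}^l]|$ exceeds $\sqrt{a_l}$, and apply the variation condition on the good event and boundedness plus Markov on the bad one. You are in fact slightly more careful than the paper in making explicit the reduction from $E[g(X_1)\mid\mathcal{F}_{-l}^l]$ to $g\bigl(E[X_1\mid\mathcal{F}_{-l}^l]\bigr)$ via $L^1$-contractivity (at the cost of the harmless constant you flag), and in spelling out the transfer of the variation condition from $X_0$ to $X_1$ by stationarity.
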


\begin{proof}
By the Markov inequality we have that
\begin{equation*}
 P\left[\left|X_0-E[X_0\big|\mathcal{F}_{-l}^{l}]\right|\geq \sqrt{a_l}\right]\leq \frac{E\left|X_0-E[X_0\big|\mathcal{F}_{-l}^{l}]\right|}{\sqrt{a_l}}\leq \sqrt{a_l}.
\end{equation*}
We conclude that
\begin{align*}
&E\left[g\left(X_0\right)-g\left(E[X_0\big|\mathcal{F}_{-l}^{l}]\right)\right]\\
=&E\left[\left(g\left(X_0\right)-g\left(E[X_0|\mathcal{F}_{-l}^{l}]\right)\right)\mathds{1}_{\left\{X_0-E[X_0|\mathcal{F}_{-l}^{l}]\geq \sqrt{a_l}\right\}}\right]\\
&+E\left[\left(g\left(X_0\right)-g\left(E[X_0|\mathcal{F}_{-l}^{l}]\right)\right)\mathds{1}_{\left\{X_0-E[X_0|\mathcal{F}_{-l}^{l}]< \sqrt{a_l}\right\}}\right]\displaybreak[0]\\
\leq&E\left[\sup_{\left\|x-X_0\right\|\leq \sqrt{a_l},\ \left\|x'-X_0\right\|\leq \sqrt{a_l}}\left|g\left(x\right)-g\left(x'\right)\right|\right]+KP\left[X_0-E[X_0\big|\mathcal{F}_{-l}^{l}]\geq \sqrt{a_l}\right]\\
\leq&L\sqrt{a_l}+K\sqrt{a_l}.
\end{align*}

\end{proof}

\begin{lem}\label{lem5} Let $\left(X_n\right)_{n\in\N}$ be an $1$-approximating functional of an absolutely regular process $\left(Z_n\right)_{n\in\mathds{Z}}$ with mixing coefficients $\beta(n)=O\left(n^{-\beta}\right)$ for a $\beta>3$ and approximation constants $a_n=O\left(n^{-(\beta+3)}\right)$. Let $C_1,C_2,L>0$ be constants. Then there exists a constant $C$, such that for all measurable, nonnegative functions $g:\R\rightarrow\R$ that are bounded by $C_1$ with $E\left|g\left(X_1\right)-Eg\left(X_1\right)\right|\geq C_2n^{-\frac{\beta}{\beta+1}}$ and satisfy the variation condition with constant $L$, and all $n\in\N$ we have
\begin{equation*}
 E\left(\sum_{i=1}^{n}g\left(X_i\right)-E\left[g\left(X_1\right)\right]\right)^4\leq C n^2\left(\log n\right)^2\left(E\left|Y_1\right|\right)^{1+\gamma}
\end{equation*}
with $\gamma=\frac{\beta-3}{\beta+1}$.
\end{lem}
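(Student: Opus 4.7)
\emph{Proof plan.} This is the $1$-approximating-functional version of Lemma~\ref{lem4}, so I would follow the same template, with the covariance bounds of Lemma~\ref{lem3} for four-fold products and Lemma~\ref{lem2} for pairs replacing Davydov's inequality. Set $Y_i := g(X_i) - Eg(X_1)$; these are bounded by $2C_1$ and centered. By Lemma~\ref{lem6}, $(Y_i)_{i\in\N}$ is itself an $1$-approximating functional of $(Z_n)_{n\in\Z}$ with approximation constants $a'_n = O(n^{-(\beta+3)/2})$, so both Lemma~\ref{lem2} and Lemma~\ref{lem3} apply to the $Y_i$, and boundedness lets me convert every $\|Y_0\|_p$ into a power of $E|Y_1|$ via $\|Y_0\|_p^p \leq (2C_1)^{p-1}E|Y_1|$, exactly as in the proof of Lemma~\ref{lem4}.

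By stationarity, $E(\sum_{i=1}^n Y_i)^4 \leq Cn\sum_{i,j,k\geq 0}|E[Y_0 Y_i Y_{i+j} Y_{i+j+k}]|$, and I bound each summand using Lemma~\ref{lem3} at whichever of the three gaps $i,j,k$ is the largest (the split at $k$ is obtained by symmetry from the split at $i$). Since $EY_0=0$, splits at $i$ or $k$ directly control $|E[Y_0\cdots Y_{i+j+k}]|$; the middle split leaves the residual pair-product $|E[Y_0 Y_i]\,E[Y_{i+j}Y_{i+j+k}]|$ to be handled by Lemma~\ref{lem2}. The key parameter choices are $\delta = 6/(\beta-3)$ in Lemma~\ref{lem3} and $\delta' = 2/(\beta-1)$ in Lemma~\ref{lem2}: the former is the unique $\delta$ making both the mixing term $\beta(\lfloor\cdot/3\rfloor)^{\delta/(2+\delta)}$ and the approximation term $(a'_{\lfloor\cdot/3\rfloor})^{\delta/(1+\delta)}$ decay at the common summable rate $m^{-3}$, with $E|Y_1|$-exponent $(\beta-3)/(\beta+3)$; the latter produces $|E[Y_0 Y_i]| \leq Ci^{-1}(E|Y_1|)^{(\beta-1)/(\beta+1)}$, so that $|E[Y_0 Y_i]\,E[Y_{i+j}Y_{i+j+k}]| \leq C(ik)^{-1}(E|Y_1|)^{1+\gamma}$ with exactly the target exponent.

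Summation over $i,j,k\leq n$ then produces two contributions: the pair-product term $\sum_{i,j,k}C(ik)^{-1}(E|Y_1|)^{1+\gamma} = O(n(\log n)^2 (E|Y_1|)^{1+\gamma})$ (the factor of $n$ from the free sum over $j$), and the four-product remainder $\sum_m Cm^{2}\cdot m^{-3}(E|Y_1|)^{(\beta-3)/(\beta+3)} = O(\log n\cdot(E|Y_1|)^{(\beta-3)/(\beta+3)})$. The hypothesis $E|Y_1|\geq C_2 n^{-\beta/(\beta+1)}$ lets me rewrite $(E|Y_1|)^{(\beta-3)/(\beta+3)} \leq Cn^{\vartheta}(E|Y_1|)^{1+\gamma}$ for some $\vartheta\leq 1$, exactly as in Lemma~\ref{lem4}, so the remainder is absorbed into the pair term, and multiplication by the outer $n$ yields the claimed $Cn^2(\log n)^2 (E|Y_1|)^{1+\gamma}$. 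The main obstacle I foresee is the bookkeeping: because Lemma~\ref{lem6} replaces $a_n$ by $\sqrt{a_n}$, one loses half of the polynomial decay, and one has to verify that the rate $\beta+3$ on $a_n$ is exactly fast enough for $(a'_n)^{\delta/(1+\delta)}$ to keep pace with $\beta(n)^{\delta/(2+\delta)}$ at a common summable rate --- which is precisely why the value $\delta = 6/(\beta-3)$ above is forced on us.
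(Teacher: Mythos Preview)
Your proposal is correct and follows essentially the same route as the paper's own proof: the paper also passes to $Y_i=g(X_i)-Eg(X_1)$, invokes Lemma~\ref{lem6} to obtain approximation constants $\tilde a_n=O(n^{-(\beta+3)/2})$, applies Lemma~\ref{lem3} with $\delta=\tfrac{6}{\beta-3}$ and Lemma~\ref{lem2} with $\delta=\tfrac{2}{\beta-1}$, and then absorbs the four-product remainder $(E|Y_1|)^{(\beta-3)/(\beta+3)}$ into $n\,(E|Y_1|)^{1+\gamma}$ via the lower bound $E|Y_1|\geq C_2 n^{-\beta/(\beta+1)}$ before summing. Your bookkeeping of the two contributions and the reason for the specific choice of~$\delta$ match the paper's computation line by line.
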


\begin{proof}
We define the random variables $Y_i=g\left(X_i\right)-Eg\left(X_1\right)$. Then by Lemma \ref{lem6}, $\left(Y_n\right)_{n\in\N}$ is an $1$-approximating functional with approximation constants $\tilde{a}_n=(L+C_1)\sqrt{a_n}=O\left(n^{-\frac{\beta+3}{2}}\right)$. Using Lemma \ref{lem3} with $\delta=\frac{6}{\beta-3}$, we obtain
\begin{multline*}
\left|EY_0Y_iY_{i+j}Y_{i+j+k}\right|\\
\leq C\left(\beta^{\frac{3}{\beta}}\left(\lfloor\frac{\max\left\{i,j,k\right\}}{3}\rfloor\right)\left\|Y_0\right\|_{\frac{2\beta}{\beta-3}}^2+\tilde{a}^{\frac{6}{\beta+3}}_{\lfloor\frac{\max\left\{i,j,k\right\}}{3}\rfloor}\left\|Y_0\right\|_{\frac{2\beta}{\beta-3}}^{\frac{2\beta}{\beta+3}}\right)+\left|E\left[Y_0Y_i\right]E\left[Y_0Y_k\right]\right|.
\end{multline*}
Making use of Lemma \ref{lem2} and $\delta=\frac{2}{\beta-1}$, it follows that
\begin{multline*}
\left|EY_0Y_iY_{i+j}Y_{i+j+k}\right|\leq C\left(\beta^{\frac{3}{\beta}}\left(\lfloor\frac{\max\left\{i,j,k\right\}}{3}\rfloor\right)\left\|Y_0\right\|_{\frac{2\beta}{\beta-3}}^2+\tilde{a}^{\frac{6}{\beta+3}}_{\lfloor\frac{\max\left\{i,j,k\right\}}{3}\rfloor}\left\|Y_0\right\|_{\frac{2\beta}{\beta-3}}^{\frac{2\beta}{\beta+3}}\right)\\
+C\left(\beta^{\frac{1}{\beta}}\left(\lfloor \frac{k}{3}\rfloor\right)\left\|Y_0\right\|^2_{\frac{2\beta}{\beta-1}}+\tilde{a}^{\frac{2}{\beta+1}}_{\lfloor \frac{k}{3}\rfloor}\left\|Y_0\right\|^{\frac{2\beta}{\beta+1}}_{\frac{2\beta}{\beta-1}}\right)\cdot\left(\beta^{\frac{1}{\beta}}\left(\lfloor \frac{i}{3}\rfloor\right)\left\|Y_0\right\|^2_{\frac{2\beta}{\beta-1}}+\tilde{a}^{\frac{2}{\beta+1}}_{\lfloor \frac{i}{3}\rfloor}\left\|Y_0\right\|^{\frac{2\beta}{\beta+1}}_{\frac{2\beta}{\beta-1}}\right).
\end{multline*}
First note that
\begin{align*}
\beta^{\frac{1}{\beta}}(n)=O\left(n^{-1}\right),\ \ \ &\tilde{a}^{\frac{2}{\beta +1}}=O\left(n^{-1}\right),\\
\beta^{\frac{3}{\beta}}(n)=O\left(n^{-3}\right),\ \ \ &\tilde{a}^{\frac{6}{\beta +3}}=O\left(n^{-3}\right),
\end{align*}
and that
\begin{align*}
\left\|Y_0\right\|^2_{\frac{2\beta}{\beta-1}}&\leq C\left\|Y_0\right\|^{\frac{2\beta}{\beta+1}}_{\frac{2\beta}{\beta-1}}\leq C\left\|Y_0\right\|_1^{\frac{\beta-1}{\beta+1}},\\
\left\|Y_0\right\|_{\frac{2\beta}{\beta-3}}^2&\leq C\left\|Y_0\right\|_{\frac{2\beta}{\beta-3}}^{\frac{2\beta}{\beta+3}}\leq C\left\|Y_0\right\|_1^{\frac{\beta-3}{\beta+3}}\leq Cn\left\|Y_0\right\|^{\frac{2\beta-2}{\beta+1}}_1,
\end{align*}
as $E\left|Y_1\right|\geq C_2n^{-\frac{\beta}{\beta+1}}$. Now by stationarity
\begin{align*}
 &E\left(\sum_{i=1}^{n}Y_i\right)^4\leq Cn\sum_{i,j,k=1}^{n}\left|E\left[Y_0Y_iY_{i+j}Y_{i+j+k}\right]\right|\\
\leq&Cn^2\sum_{i=1}^{n}\beta^{\frac{1}{\beta}}\left(\lfloor \frac{i}{3}\rfloor\right)\sum_{k=1}^n\beta^{\frac{1}{\beta}}\left(\lfloor \frac{k}{3}\rfloor\right)\left\|Y_1\right\|_1^{\frac{2\beta-2}{\beta}}+Cn^2\sum_{i=1}^n\tilde{a}^{\frac{2}{\beta+1}}_{\lfloor \frac{i}{3}\rfloor}\sum_{k=0}^n\tilde{a}^{\frac{2}{\beta+1}}_{\lfloor \frac{k}{3}\rfloor}\left\|Y_1\right\|_1^{\frac{2\beta-2}{\beta+1}}\\
&+Cn\sum_{m=1}^nm^2\beta^{\frac{3}{\beta}}\left(\lfloor\frac{m}{3}\rfloor\right)\left\|Y_0\right\|_1^{\frac{\beta-3}{\beta}}+Cn\sum_{m=1}^nm^2\tilde{a}^{\frac{6}{\beta+3}}_{\lfloor\frac{m}{3}\rfloor}\left\|Y_0\right\|_1^{\frac{\beta-3}{\beta+3}}\displaybreak[0]\\
\leq&Cn^2\sum_{i=1}^{n}i^{-1}\sum_{k=1}^{n}k^{-1}\left\|Y_1\right\|_1^{\frac{2\beta-2}{\beta+1}}
+Cn^2\sum_{m=1}^{n}m^2m^{-3}\left\|Y_1\right\|_1^{\frac{2\beta-2}{\beta+1}}\\
\leq&C n^2\left(\log n\right)^2\left(E\left|Y_1\right|\right)^{\frac{2\beta-2}{\beta+1}}=C n^2\left(\log n\right)^2\left(E\left|Y_1\right|\right)^{1+\gamma}
\end{align*}

\end{proof}

We use the representation $R_n=Z_n\left(F\left(t_p\right)-F_n\left(t_p\right)\right)$, so we have to know the a.s. asymptotic behaviour of $F_n\left(t_p\right)-F\left(t_p\right)$. The law of the iterated logarithm for functionals of mixing data has been proved by Reznik \cite{rezn}. We only prove that $\sqrt{\frac{n}{\log \log n}}\left(F_n\left(t_p\right)-F\left(t_p\right)\right)$ is bounded a.s., but under somewhat milder conditions, which fit better to our theorems:

\begin{prop}\label{theo3b} Let $\left(X_n\right)_{n\in\N}$ be a bounded, $1$-approximating functional with approximation constants $a_n=O\left(n^{-\beta}\right)$ for some $\beta>3$ of an absolutely regular process 
$\left(Z_n\right)_{n\in\N}$ with mixing coefficients $\beta\left(n\right)=O\left(n^{-\beta}\right)$. Then
\begin{equation}
 \sum_{i=1}^{n}\left(X_i-EX_i\right)=O\left(\sqrt{n\log\log n}\right)\quad\text{a.s.}
\end{equation}
\end{prop}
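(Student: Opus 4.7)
The plan is to derive a Bernstein-type exponential inequality for $S_n := \sum_{i=1}^n(X_i - EX_i)$ by a truncation--blocking--coupling argument, and then apply Borel--Cantelli along a dyadic subsequence, filling the gaps with a maximal inequality. Write $M$ for an almost sure bound on $|X_i|$ and set $Y_i := X_i - EX_i$, so $|Y_i| \leq 2M$.

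The first step is truncation to a finite-range functional. Set $\tilde Y_i := E[Y_i \mid \mathcal F_{i-l}^{i+l}]$ with $l = l_n = \lfloor n^{1/6}\rfloor$; then $|\tilde Y_i| \leq 2M$ and $\|Y_i - \tilde Y_i\|_1 \leq a_l = O(l^{-\beta})$. The cumulative truncation error has $L^1$ norm at most $n a_{l_n} = O(n^{1-\beta/6})$, which is $o(\sqrt n)$ since $\beta > 3$; a Markov/Borel--Cantelli step along $n_k = 2^k$, using the deterministic bound $|Y_i - \tilde Y_i| \leq 4M$ to control increments between dyadic landmarks, yields $\sum_{i=1}^n(Y_i - \tilde Y_i) = o(\sqrt{n\log\log n})$ a.s.

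The second step is a big-block/small-block decomposition together with coupling. Partition $\{1,\ldots,n\}$ into alternating blocks of lengths $p_n$ (big) and $q_n$ (small) with $q_n > 2 l_n$, so that the $l_n$-neighbourhoods of the big blocks are pairwise disjoint. Each big-block sum $T_j := \sum_{i \in B_j} \tilde Y_i$ is then measurable with respect to the $\sigma$-field generated by $Z$'s on an enlarged big block, and by Berbee's coupling theorem for absolutely regular sequences (see Bradley \cite{brad}) we can construct an independent sequence $(T_j^*)$ with the same marginals satisfying $P[T_j \neq T_j^*] \leq \beta(q_n - 2l_n)$. For the independent sums $T_j^*$, each bounded by $2Mp_n$ and with $\var(T_j^*) = O(p_n)$ by Lemma \ref{lem2} applied to $\tilde Y_i$, Bernstein's inequality gives
\[
P\!\left(\Bigl|\sum_j T_j^*\Bigr| \geq t\right) \leq 2\exp\!\left(-\frac{t^2}{C(n + p_n t)}\right).
\]

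The third step is calibration and Borel--Cantelli. Choosing, say, $q_n = \lfloor n^{1/4}\rfloor$ and $p_n = \lfloor \sqrt{n}/\log n\rfloor$, the coupling error in $L^1$ is $O(n \beta(q_n - 2l_n)) = O(n^{1-\beta/4}) = o(\sqrt n)$, while the small-block sum has variance $O(n q_n/p_n)$, which together with Chebyshev and Borel--Cantelli along $n_k = 2^k$ forces it to be $o(\sqrt{n\log\log n})$ a.s. With $t = C_0\sqrt{n\log\log n}$ and this $p_n$, the $p_n t$ term in the Bernstein exponent is negligible against $n$, so the tail bound reads $O((\log n)^{-cC_0^2})$; taking $C_0$ large, this is summable along $n_k = 2^k$ and Borel--Cantelli gives $|S_{n_k}| = O(\sqrt{n_k\log\log n_k})$ a.s. For $n \in (n_k, n_{k+1}]$, Móricz's maximal inequality applied to the fourth-moment bound of Lemma \ref{lem5} (or a running-maximum version of the block argument) controls $\max_{n_k < n \leq n_{k+1}}|S_n - S_{n_k}|$ at the same order, completing the proof.

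The main obstacle is the simultaneous calibration of the three scales $(l_n, q_n, p_n)$: the truncation error, the coupling error, and the small-block contribution must all be $o(\sqrt{n\log\log n})$ a.s., while the Bernstein exponent for the independent big-block sum must remain of order $\log\log n$. Under the polynomial bounds $\beta(n), a_n = O(n^{-\beta})$ with $\beta > 3$ there is just enough slack for the constraints to be compatible, but the bookkeeping is the technical heart of the proof.
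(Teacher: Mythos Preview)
Your strategy---blocking, coupling to independent block sums, Bernstein, then Borel--Cantelli along a dyadic subsequence with a maximal inequality to fill the gaps---is exactly the paper's. The implementation differs in three places: (i) instead of your two-stage ``truncate to a finite-range functional, then apply Berbee'' reduction, the paper invokes the coupling result of Borovkova et al.\ \cite{boro} (their Theorem~3), which handles the $1$-approximation and the $\beta$-mixing simultaneously and so eliminates the separate truncation step and the extra scale $l_n$; (ii) rather than big/small blocks, the paper takes equal blocks of size $k\sim 2^{l/2}/\log l$ and splits into odd- and even-indexed blocks, so there are no small blocks to control, only a single incomplete remainder block handled via a fourth-moment bound and M\'oricz; (iii) the maximal step for $n\in[2^l,2^{l+1})$ is done via Skorohod's inequality applied directly to the independent block sums, not via M\'oricz on the original partial sums. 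Your route works too, but the paper's packaging collapses your three scales $(l_n,q_n,p_n)$ into one. One small gap in your sketch: the phrase ``using the deterministic bound $|Y_i-\tilde Y_i|\le 4M$ to control increments between dyadic landmarks'' cannot be right as stated, since that bound only gives $O(n_k)$ over a dyadic gap; the fix is simply to bound $\max_{n\le n_{k+1}}\bigl|\sum_{i\le n}(Y_i-\tilde Y_i)\bigr|$ by $\sum_{i\le n_{k+1}}|Y_i-\tilde Y_i|$, whose $L^1$ norm is $O(n_{k+1}a_{l_{n_{k+1}}})=O(n_{k+1}^{1-\beta/6})$, which is summable against $\sqrt{n_k}$ along $n_k=2^k$.
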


\begin{proof} W.l.o.g. we assume that $EX_i=0$. We use a blocking technique and define
\begin{equation*}
 B_{in}=\sum_{j=1}^{k}X_{(i-1)k+j}
\end{equation*}
for $i=1,\ldots,\lfloor\frac{n}{k}\rfloor$ with $k=k_n=\lfloor\frac{2^{\frac{l}{2}}}{\log l}\rfloor$ for $2^{l}\leq n<2^{l+1}$ and write
\begin{equation*}
\sum_{i=1}^{n}X_i=\sum_{\substack{s\leq\lfloor\frac{n}{k}\rfloor\\s\text{ odd}}}B_{sn}+
\sum_{\substack{s\leq\lfloor\frac{n}{k}\rfloor\\s\text{ even}}}B_{sn}+\sum_{i=k\lfloor\frac{n}{k}\rfloor+1}^nX_i.
\end{equation*}
By Lemma 2.24 of Borovkova et al. \cite{boro}, we have that for all $N,m\in\N$
\begin{equation*}
E\left(\sum_{i=N+1}^{N+m}X_i\right)^4\leq Cm^2
\end{equation*}
and by Corollary 1 of M\'oricz \cite{mori} that
\begin{equation*}
E\left(\max_{1\leq m\leq k}\left|\sum_{i=k\lfloor\frac{n}{k}\rfloor+1}^{k\lfloor\frac{n}{k}\rfloor+m}X_i\right|\right)^4\leq Ck^2.
\end{equation*}
It follows that
\begin{equation*}
E\left(\max_{2^l\leq n< 2^{l+1}}\left|\sum_{i=k\lfloor\frac{n}{k}\rfloor+1}^nX_i\right|\right)^4\leq\frac{n}{k}E\left(\max_{1\leq m\leq k}\left|\sum_{i=k\lfloor\frac{n}{k}\rfloor+1}^{k\lfloor\frac{n}{k}\rfloor+m}X_i\right|\right)^4\leq Cnk.
\end{equation*}
So we get for every $\epsilon>0$
\begin{multline*}
\sum_{l=0}^{\infty}P\left[\max_{2^l\leq n< 2^{l+1}}\left|\sum_{i=k\lfloor\frac{n}{k}\rfloor+1}^nX_i\right|\geq 2^{\frac{l}{2}}\epsilon\right]\\
\leq \sum_{l=0}^{\infty}\frac{1}{\epsilon^42^{2l}}E\left(\max_{2^l\leq n< 2^{l+1}}\left|\sum_{i=k\lfloor\frac{n}{k}\rfloor+1}^nX_i\right|\right)^4\leq\frac{C}{\epsilon^4}\sum_{l=0}^{\infty}\frac{2^{\frac{3}{2}l}\log l}{2^{2l}}<\infty
\end{multline*}
and by a applying the Borel-Cantelli lemma we conclude that $\sum_{i=k\lfloor\frac{n}{k}\rfloor+1}^nX_i=o\left(\sqrt{n}\right)$ a.s. By Theorem 3 of Borovkova et al. \cite{boro}, there exists a sequence of independent random variables $\left(B'_{sn}\right)_{s\in\N}$, such that for all even $s$
\begin{equation*}
P\left[\left|B_{sn}-B'_{sn}\right|\leq 2A_{\lfloor\frac{k}{3}\rfloor}\right]\geq 1-2A_{\lfloor\frac{k}{3}\rfloor}-\beta_{\lfloor\frac{k}{3}\rfloor}
\end{equation*}
with $A_L=\sqrt{2\sum_{l=L}^{\infty}a_l}=O\left(L^{-(1+\frac{\beta-3}{2})}\right)$. It follows that
\begin{multline*}
P\left[\sup_{m\leq\lfloor\frac{2^{l+1}}{k}\rfloor}\sum_{\substack{1\leq s\leq m\\s\text{ even}}}\left|B_{sn}-B'_{sn}\right|\geq 2\frac{n}{k}A_{\lfloor\frac{k}{3}\rfloor} \right]\leq \frac{2^{l+1}}{2k}\left(2A_{\lfloor\frac{k}{3}\rfloor}+\beta_{\lfloor\frac{k}{3}\rfloor}\right)\\
\leq C\frac{2^{l+1}}{k^{2+\frac{\beta-3}{2}}}\leq C2^{-\frac{\beta-3}{4}l}(\log l)^{\frac{\beta+1}{2}}.
\end{multline*}
Note that $2\frac{n}{k}A_{\lfloor\frac{k}{3}\rfloor}\rightarrow0$ as $n\rightarrow\infty$ so that
\begin{equation*}
\sum_{l=1}^{\infty}P\left[\sup_{2^l\leq n<2^{l+1}}\left|\sum_{\substack{s\leq\lfloor\frac{n}{k}\rfloor\\s\text{ even}}}B_{sn}-B_{sn}'\right|\geq \epsilon\right]\leq C\sum_{l=1}^{\infty}2^{-l\frac{\beta-3}{4}}(\log l)^{\frac{\beta+1}{2}}<\infty
\end{equation*}
and it follows hat $\sum_{\substack{s\leq\lfloor\frac{n}{k}\rfloor\\s\text{ even}}}\left(B_{sn}-B_{sn}'\right)=o\left(1\right)$ a.s. The same arguments justify that there exists sequences $\left(B_{(sn)}''\right)_{s\in\N}$, such that  $\sum_{\substack{s\leq\lfloor\frac{n}{k}\rfloor\\s\text{ odd}}}\left(B_{sn}-B_{sn}''\right)=o\left(1\right)$, so it suffices to show that
\begin{equation*}
\frac{1}{\sqrt{n\log\log n}}\left|\sum_{\substack{s\leq\lfloor\frac{n}{k}\rfloor\\s\text{ even}}}B_{sn}'\right|\leq C
\end{equation*}
a.s. (the sequences $\left(B_s''\right)$ can be treated in the same way). By Lemma 2.23 of Borovkova et al. \cite{boro}, we have that
\begin{equation*}
\var\left[B_{sn}'\right]\leq Ck
\end{equation*}
and
\begin{equation*}
\sum_{\substack{s\leq\lfloor\frac{n}{k}\rfloor\\s\text{ even}}}\var\left[B_{sn}'\right]\leq Cn
\end{equation*}
and additionally $\left|B_{sn}'\right|\leq Ck$. So by Bernstein's inequality (see Bennett \cite{benn}), we obtain for all $N\leq\lfloor\frac{n}{k}\rfloor$, $2^l\leq n<2^{l+1}$ and $C_1>0$
\begin{multline*}
P\left[\left|\sum_{\substack{N\leq s\leq\lfloor\frac{2^{(l+1)}}{k}\rfloor\\s\text{ even}}}B_{sn}'\right|\geq C_1\sqrt{2^l\log l}\right]\leq2e^{-\frac{C_1^2 2^l\log l}{-2\sum\var\left[B_{sn}'\right]+2C_1\sqrt{2^l\log l}\left\|B_{1n}'\right\|_{\infty}}}\\
\leq2e^{-\frac{C_1^2 2^l\log l}{C2^{(l+1)}+CC_1\sqrt{2^l\log l}\lfloor 2^\frac{l}{2}\log^{-1} l\rfloor}}\leq2l^{-\frac{C_1}{C}}.
\end{multline*}
Due to Skorohod's inequality (see Shorack, Wellner \cite{shor}, p. 844), we conclude that
\begin{equation}\label{line27}
P\left[\sup_{2^l\leq n<2^{l+1}}\left|\sum_{\substack{ s\leq\lfloor\frac{n}{k}\rfloor\\s\text{ even}}}B_s'\right|\geq 2C_1\sqrt{n\log\log n}\right]\leq \frac{2l^{-\frac{C_1}{C}}}{1-2l^{-\frac{C_1}{C}}}.
\end{equation}
Choosing the constant $C_1$ large enough, the probabilities in Line (\ref{line27}) are summable and
\begin{equation*}
\frac{1}{\sqrt{n\log\log n}}\left|\sum_{\substack{s\leq\lfloor\frac{n}{k}\rfloor\\s\text{ even}}}B_{sn}'\right|\leq 2C_1
\end{equation*} 
for almost all $n\in\N$ a.s. follows by the Borel-Cantelli lemma.
\end{proof}

\subsection{U-Quantiles}

$U$-statistics can be decomposed into a linear and a degenerate part, which is a $U$-statistic with kernel $h_2(x,y,t):=h(x,y,t) - h_1(x,t) -h_1(y,t) -U\left(t\right)$. If $h$ is bounded and satisfies the variation condition in $t$, the same holds for $h_2$, see Lemma 4.5 of Dehling, Wendler \cite{deh2}. Furthermore, $h_2$ is degenerate, i.e. for all $y,t\in\R:$ $Eh_2\left(X_1,y,t\right)=0$. For the degenerate part, we need generalized covariance inequalities.

\begin{lem}\label{lem7} Let $\left(X_n\right)_{n\in\N}$ be a stationary, strongly mixing sequence with $\left\|X_n\right\|_1<\infty$, $h:\R\times\R\times\R\rightarrow\R$ a bounded kernel function that satisfies the variation condition in $t$. Then there is a constant, such that
\begin{equation*}
 \left|E\left|h_2\left(X_{i_1},X_{i_2},t\right)h_2\left(X_{i_3},X_{i_4},t\right)\right]\right|\leq C\alpha^{\frac{1}{2}}\left(m\right),
\end{equation*}
where $m=\max\left\{i_{(2)}-i_{(1)},i_{(4)}-i_{(3)}\right\}$, $\left\{i_1,i_2,i_3,i_4\right\}=\left\{i_{(1)},i_{(2)}, i_{(3)},i_{(4)}\right\}$ and $i_{(1)}\leq i_{(2)}\leq i_{(3)}\leq i_{(4)}$.
\end{lem}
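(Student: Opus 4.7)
The proof is by case analysis on how the four ordered indices $i_{(1)}\le\cdots\le i_{(4)}$ are paired inside the two $h_2$-kernels (the three possible pairings $\{i_{(1)},i_{(2)}\}\{i_{(3)},i_{(4)}\}$, $\{i_{(1)},i_{(3)}\}\{i_{(2)},i_{(4)}\}$, and $\{i_{(1)},i_{(4)}\}\{i_{(2)},i_{(3)}\}$) and on whether $m=i_{(2)}-i_{(1)}$ or $m=i_{(4)}-i_{(3)}$. By time-reversal symmetry and stationarity it is enough to treat $m=i_{(2)}-i_{(1)}$; then $X_{i_{(1)}}$ is separated from $(X_{i_{(2)}},X_{i_{(3)}},X_{i_{(4)}})$ by at least $m$ steps, and in each pairing $X_{i_{(1)}}$ lies inside exactly one of the two kernels, paired with some index $q\in\{i_{(2)},i_{(3)},i_{(4)}\}$.

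The first step is to exploit the degeneracy of $h_2$, $\int h_2(x,y,t)\,d\mu(x)=0$ for all $y,t$, where $\mu$ is the common marginal law. If $\tilde X$ has law $\mu$ and is independent of $(X_j)_{j\neq i_{(1)}}$, Fubini gives that the expectation obtained after substituting $\tilde X$ for $X_{i_{(1)}}$ is zero. Consequently
\[
E\bigl[h_2(X_{i_1},X_{i_2},t)h_2(X_{i_3},X_{i_4},t)\bigr]
=E\bigl[h_2(X_{i_1},X_{i_2},t)h_2(X_{i_3},X_{i_4},t)\bigr]-E\bigl[\ldots\text{with }\tilde X\text{ for }X_{i_{(1)}}\bigr],
\]
and the right-hand side is a mixing-type defect across the gap $m$.

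To extract the $\alpha^{1/2}(m)$ rate I would insert the Hoeffding decomposition $h_2(x,y,t)=h(x,y,t)-h_1(x,t)-h_1(y,t)-U(t)$ inside the kernel containing $X_{i_{(1)}}$. The three summands in which $X_{i_{(1)}}$ enters only through a single-variable factor ($h_1(X_{i_{(1)}},t)$ or the constant $U(t)$) yield genuine covariances of the form $\cov(f(X_{i_{(1)}}),V)$ between a function of $X_{i_{(1)}}$ and a function of the remaining variables, separated by a gap of at least $m$; Davydov's inequality (Lemma~\ref{lem1}) with $p_1=p_2=4$ and $p_3=2$ then applies and produces the factor $\alpha^{1/2}(m)$, with a constant depending only on $\|h\|_\infty$. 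The fourth, genuinely joint, summand $h(X_{i_{(1)}},X_{q},t)\,V$ is treated by first approximating $h(\cdot,X_q,t)$ in $L^1$ by a finite sum $\sum_k f_k(X_{i_{(1)}})g_k(X_q)$ of separable functions (the variation condition is used to control the approximation error by the partition mesh), and then applying Davydov termwise.

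The main obstacle is precisely this joint summand: because $h(X_{i_{(1)}},X_q,t)$ itself straddles the gap of size $m$, a single direct covariance bound is not available, and the variation condition is what allows the reduction to product-structured pieces at a controlled cost. Once this is settled, summing the contributions from the finitely many pairings, sign patterns, and the symmetric case $m=i_{(4)}-i_{(3)}$ yields the stated inequality with a uniform constant $C$.
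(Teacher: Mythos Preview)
The paper does not give an independent argument here; it simply observes that the bound is the limiting case $\delta\to\infty$ of Lemma~4.2 in Dehling--Wendler~\cite{deh2}. So your proposal is really an attempt to reconstruct that external lemma, and your outline (exploit degeneracy to subtract an independent copy, reduce the cross-gap kernel to separable pieces, then apply Davydov termwise) is indeed the skeleton of that proof. Two points, however, are not yet under control in your sketch.

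The substantive gap concerns the approximation step. The variation condition in Definition~\ref{def3} is formulated for the \emph{independent} pair $(X,Y)$: it bounds $E_{X\perp Y}\bigl[\sup|h(x,y,t)-h(x',y',t)|\bigr]$. When you replace $h(\cdot,X_q,t)$ by a finite sum $\sum_k f_k(X_{i_{(1)}})g_k(X_q)$ and say ``the variation condition is used to control the approximation error by the partition mesh'', this only bounds the error $E|h(\tilde X,X_q,t)-\tilde h(\tilde X,X_q,t)|$ in the $\tilde X$-term, where $\tilde X\perp X_q$. The term you actually need, $E|h(X_{i_{(1)}},X_q,t)-\tilde h(X_{i_{(1)}},X_q,t)|$, is an expectation under the \emph{dependent} law of $(X_{i_{(1)}},X_q)$, and for $\alpha$-mixing there is no direct inequality comparing $E_{\text{dep}}[\phi]$ with $E_{\text{indep}}[\phi]$ for a general bounded $\phi$ (that would be $\beta$-mixing). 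In~\cite{deh2} this is handled by a careful choice of the truncation level $M$ and mesh $\epsilon$ \emph{as functions of} $\alpha(m)$, together with the moment hypothesis $\|X_n\|_1<\infty$ (Markov gives a truncation error $\le \|X_1\|_1/M$); your sketch invokes neither $\|X_n\|_1$ nor this optimisation, and without it one does not land on the exponent $\tfrac12$.

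A minor point: expanding $h_2=h-h_1-h_1-U$ inside the offending factor is a detour rather than a reduction. The three ``single-variable'' pieces are indeed easy, but the remaining piece $h(X_{i_{(1)}},X_q,t)\,V$ has exactly the same structure as the original $h_2(X_{i_{(1)}},X_q,t)\,V$ (a bounded kernel straddling the gap, satisfying the same variation condition), so nothing has been gained. You may as well work with $h_2$ throughout and go straight to the discretisation argument.
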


\begin{proof} The result is easily obtained by taking the limit $\delta\rightarrow\infty$ in Lemma 4.2 of Dehling, Wendler \cite{deh2}.
 \end{proof}

\begin{lem}\label{lem8} Let $\left(X_n\right)_{n\in\N}$ be an $1$-approximating functional with approximation constants $\left(a_n\right)_{n\in\N}$ of an absolutely regular process with mixing coefficients $\left(\beta(k)\right)_{k\in\N}$. Let $h\left(\cdot,\cdot,t\right):\R\times\R\rightarrow\R$ be a bounded kernel function that satisfies the variation conditon in $t$. Then
\begin{equation*}
 \left|E\left[h_2\left(X_{i_1},X_{i_2},t\right)h_2\left(X_{i_3},X_{i_4},t\right)\right]\right|\leq C\left(\beta(\lfloor\frac{m}{3}\rfloor)+A_{\lfloor\frac{m}{3}\rfloor}\right)
\end{equation*}
with $A_L=\sqrt{2\sum_{l=L}^{\infty}a_l}$.
\end{lem}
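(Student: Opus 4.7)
The plan is to adapt the proof of the strongly-mixing case (Lemma \ref{lem7} above, equivalently Lemma 4.2 of \cite{deh2}) by replacing each appeal to Davydov's inequality with a coupling argument combining the $1$-approximation property of $(X_n)$ with the absolute regularity of $(Z_n)$, and by exploiting the degeneracy $E[h_2(x,Y,t)]=0$ (valid for $Y\sim X_1$ independent of $x$) to produce the main cancellation. Set $L:=\lfloor m/3\rfloor$. By the symmetry of $h_2$ in its first two arguments and, if necessary, by swapping the two factors of the product, one may assume without loss of generality that $m=i_{(4)}-i_{(3)}$ and $i_{(4)}=i_4$, so that $X_{i_4}$ sits alone to the right of the gap of length $m$; the case $m=i_{(2)}-i_{(1)}$ is handled symmetrically. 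Define $\tilde X_{i_k}:=E[X_{i_k}\mid\mathcal F^{i_k+L}_{i_k-L}]$; by the $1$-approximation condition $E|X_{i_k}-\tilde X_{i_k}|\leq a_L$, and the innovation block supporting $\tilde X_{i_4}$ is separated by at least $m-2L\geq L$ coordinates from the block $\mathcal F^{i_{(3)}+L}_{i_{(1)}-L}$ supporting $\tilde X_{i_1},\tilde X_{i_2},\tilde X_{i_3}$.

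The argument then proceeds in three substitutions. First, replace each $X_{i_k}$ by $\tilde X_{i_k}$ inside both factors of the product $h_2(\cdot,\cdot,t)h_2(\cdot,\cdot,t)$: split on the event where every $|X_{i_k}-\tilde X_{i_k}|\leq\sqrt{a_L}$ (whose complement has probability $O(\sqrt{a_L})$ by Markov), apply the variation condition of $h$ on the good event, and use boundedness of $h$ on the complement, for a total cost of $O(\sqrt{a_L})$. Second, by a standard absolute-regularity block-coupling (cf.\ Theorem 3 of \cite{boro}) produce $\tilde X^*_{i_4}$, on an enlarged probability space, with the same law as $\tilde X_{i_4}$, independent of $\mathcal F^{i_{(3)}+L}_{i_{(1)}-L}$, and with $P[\tilde X^*_{i_4}\neq\tilde X_{i_4}]\leq\beta(L)$; substituting $\tilde X^*_{i_4}$ for $\tilde X_{i_4}$ costs $O(\beta(L))$ by boundedness of $h_2$. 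Third, condition on $(\tilde X_{i_1},\tilde X_{i_2},\tilde X_{i_3})$ and reduce the problem to bounding $\psi(\tilde X_{i_3})$, where $\psi(x):=E[h_2(x,Y,t)]$ with $Y\sim\tilde X_{i_4}$ independent of $x$; if $Y$ had the law of $X_1$, degeneracy would give $\psi\equiv 0$, so the remaining error only reflects the discrepancy between the laws of $\tilde X_{i_4}$ and $X_{i_4}\sim X_1$, which via the natural coupling $Y=\tilde X_{i_4}$, $Y'=X_{i_4}$ and one more application of the variation condition contributes another $O(\sqrt{a_L})$. Summing the three errors and using $\sqrt{a_L}\leq A_L$ (since $A_L\geq\sqrt{2a_L}$) yields the claimed bound $C(\beta(L)+A_L)$.

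The principal obstacle is the careful deployment of the variation condition of Definition \ref{def3}, which is formulated for joint perturbation around an independent pair of copies of $X_1$, whereas the pairs $(X_{i_a},X_{i_b})$ appearing inside $h_2$ do not in general have a product joint distribution. The workaround is that the decoupling step effectively restores the product structure on the pair $(\tilde X_{i_3},\tilde X^*_{i_4})$, so that the variation condition can legitimately be invoked on this pair (and on the relevant marginals) in the final step. Bookkeeping the three error contributions without compounding, and preserving the independence produced by the absolute-regularity coupling through the subsequent marginal adjustment, is the delicate aspect of the argument.
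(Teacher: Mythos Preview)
The paper does not give a self-contained argument: its entire proof is the single line ``the result is easily obtained by taking the limit $\delta\to\infty$ in Lemma 4.3 of Dehling--Wendler \cite{deh2}.'' Your proposal is therefore a reconstruction of that external lemma, and the overall architecture---localize to finite $Z$-blocks, decouple the isolated coordinate via absolute regularity, then exploit the degeneracy of $h_2$---is correct and is indeed what lies behind the cited result.

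There is, however, a genuine gap in your Step~1. You replace all four $X_{i_k}$ by $\tilde X_{i_k}=E[X_{i_k}\mid\mathcal F^{i_k+L}_{i_k-L}]$ and bound the cost by invoking the variation condition on the good event $\{|X_{i_k}-\tilde X_{i_k}|\le\sqrt{a_L}\ \forall k\}$. But Definition~\ref{def3} controls the expected oscillation only when the sup is centred at an \emph{independent} pair $(X,Y)$, whereas on your good event you need it centred at the \emph{dependent} pairs $(X_{i_1},X_{i_2})$ and $(X_{i_3},X_{i_4})$; this simply is not available from the hypotheses. You rightly flag this as ``the principal obstacle'', but your stated workaround---that the decoupling of Step~2 restores product structure on $(\tilde X_{i_3},\tilde X^*_{i_4})$---only salvages Step~3; it does nothing for Step~1, which precedes any decoupling. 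The remedy is to merge Steps~1 and~2 into a single coupling: the result behind Theorem~3 of \cite{boro} (used in the proof of Proposition~\ref{theo3b}) produces directly a variable $X'_{i_4}$ with the law of $X_1$, independent of the \emph{original} triple $(X_{i_1},X_{i_2},X_{i_3})$, and with $P[|X_{i_4}-X'_{i_4}|>2A_L]\le 2A_L+\beta(L)$. Then $E[h_2(X_{i_1},X_{i_2},t)\,h_2(X_{i_3},X'_{i_4},t)]=0$ by degeneracy, and the replacement error is at most $\|h_2\|_\infty\,E|h_2(X_{i_3},X_{i_4},t)-h_2(X_{i_3},X'_{i_4},t)|$; on the close event this is dominated by the oscillation sup of Definition~\ref{def3} centred at the \emph{independent} pair $(X_{i_3},X'_{i_4})$, where the variation condition legitimately applies. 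No separate localization of $X_{i_1},X_{i_2},X_{i_3}$---and hence no second problematic appeal to the bivariate variation condition---is then required.
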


\begin{proof} The result is easily obtained by taking the limit $\delta\rightarrow\infty$ in Lemma 4.3 of Dehling, Wendler \cite{deh2}.
 \end{proof}

\begin{lem}\label{lem9} If a kernel function $h:\R\times\R\times\R\rightarrow\R$ satisfies the variation condition in $t$ with constant $L$, then the variation condition holds for $h_1(\cdot,t)$ with the same constant $L$. 
\end{lem}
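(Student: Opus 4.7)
The plan is to exploit the fact that $h_1(\cdot,t)$ is obtained from $h(\cdot,\cdot,t)$ by integrating out the second argument, and that such an averaging operation cannot increase variation. Writing $h_1(x,t)-h_1(x',t) = E[h(x,Y,t)-h(x',Y,t)]$ for $Y$ an independent copy of $X_1$, Jensen's inequality yields
\begin{equation*}
|h_1(x,t)-h_1(x',t)| \leq E_Y|h(x,Y,t)-h(x',Y,t)|,
\end{equation*}
where $E_Y$ denotes integration over $Y$ only. I would then take the supremum over $x,x'$ with $|x-X_0|\leq\epsilon$ and $|x'-X_0|\leq\epsilon$ (with $X_0$ independent of $Y$) and pass it inside $E_Y$ by a Tonelli-type argument, which is legitimate since the integrand is nonnegative and bounded.

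To close the estimate, I would observe that specializing $y=y'=Y$ in the two-argument supremum of Definition \ref{def3} gives $\|(x,Y)-(X_0,Y)\| = |x-X_0|$, so that
\begin{equation*}
\sup_{|x-X_0|\leq\epsilon,\,|x'-X_0|\leq\epsilon}|h(x,Y,t)-h(x',Y,t)| \leq \sup_{\|(x,y)-(X_0,Y)\|\leq\epsilon,\,\|(x',y')-(X_0,Y)\|\leq\epsilon}|h(x,y,t)-h(x',y',t)|.
\end{equation*}
Taking the outer expectation over $X_0$ and combining it with $E_Y$ into the joint expectation over the independent pair $(X_0,Y)$, then invoking the variation condition for $h$, produces the bound $L\epsilon$ with the same constant $L$.

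There is no genuine obstacle; the only technicality is the measurability of the supremum over an uncountable index set when pulling it inside $E_Y$. This is standard: because $h$ is bounded and nonnegative, the supremum can be realized along a countable dense subset of the compact domain $\{(x,x'):|x-X_0|,|x'-X_0|\leq\epsilon\}$, which is already implicit in the well-posedness of the variation conditions of Definitions \ref{def2} and \ref{def3}.
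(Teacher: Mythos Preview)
Your proposal is correct and follows essentially the same route as the paper's proof: write $h_1(x,t)-h_1(x',t)$ as $E_Y[h(x,Y,t)-h(x',Y,t)]$, pull the absolute value inside by Jensen, use $\sup E_Y\leq E_Y\sup$, enlarge the one-variable supremum to the two-variable one of Definition~\ref{def3}, and take the joint expectation over the independent pair $(X,Y)$. The paper compresses these steps into a single chain of inequalities and does not comment on the measurability of the supremum, but otherwise your argument and the paper's coincide.
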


\begin{proof}
Let be $Y$ independent of $X$ with the same distribution as $X$. Then
\begin{multline*}
 E\left[\sup_{\left\|x-X\right\|\leq \epsilon,\ \left\|x'-X\right\|\leq \epsilon}\left|h_1\left(x,t\right)-h_1\left(x',t\right)\right|\right]\\
= E\left[\sup_{\left\|x-X\right\|\leq \epsilon,\ \left\|x'-X\right\|\leq \epsilon}\left|Eh\left(x,Y,t\right)-Eh\left(x',Y,t\right)\right|\right]\displaybreak[0]\\
\leq E\left[\sup_{\left\|x-X\right\|\leq \epsilon,\ \left\|x'-X\right\|\leq \epsilon}\left|h\left(x,Y,t\right)-h\left(x',Y,t\right)\right|\right]\\
\leq E\left[\sup_{\left\|(x,y)-(X,Y)\right\|\leq \epsilon,\ \left\|(x',y')-(X,Y)\right\|\leq \epsilon}\left|h\left(x,y,t\right)-h\left(x',y',t\right)\right|\right]\leq L\epsilon.
\end{multline*}

\end{proof}

The law of the iterated logarithm for $U$-statistics has been investigated by Dehling, Wendler \cite{deh2}, but here we state it under slightly different conditions:

\begin{prop}\label{theo4} Let $\left(X_n\right)_{n\in\N}$ be a stationary, strongly mixing sequence with $\left\|X_n\right\|_1<\infty$, $h:\R\times\R\times\R\rightarrow\R$ a bounded kernel function which satisfies the variation condition in $t$. If the mixing coefficients satisfy $\alpha\left(n\right)=O\left(n^{-\beta}\right)$ for some $\beta>2$, then a.s.
\begin{equation}
\limsup_{n\rightarrow\infty}\pm\sqrt{\frac{n}{\log\log n}}U_n\left(t\right)=\sqrt{2\sigma_1^2}
\end{equation}
with $\sigma^2_1=\var\left[h_1\left(X_1,t\right)\right]+2\sum_{k=2}^\infty\cov\left[h_1\left(X_1,t\right),h_1\left(X_k,t\right)\right]$.
\end{prop}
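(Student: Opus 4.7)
The plan is to apply Hoeffding's decomposition
\begin{equation*}
U_n(t)-U(t) \;=\; \frac{2}{n}\sum_{i=1}^{n} h_1(X_i,t) \;+\; \frac{2}{n(n-1)}\sum_{1\le i<j\le n} h_2(X_i,X_j,t) \;=:\; L_n+D_n,
\end{equation*}
and to show that the LIL is carried by the linear term $L_n$ while the degenerate term $D_n$ is $o(\sqrt{\log\log n/n})$ a.s. (The statement is naturally read with $U(t)$ subtracted on the left, since $U_n(t)\to U(t)$.) Combined, these two ingredients give the claimed $\limsup$.

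For $L_n$: the sequence $(h_1(X_i,t))_{i\in\N}$ is bounded, stationary, centered, and strongly mixing with mixing coefficients bounded by $\alpha(n)=O(n^{-\beta})$, since $h_1(\cdot,t)$ is a Borel function of a single coordinate. The hypothesis $\beta>2$ implies $\sum_n\alpha(n)<\infty$, so Rio's LIL (Theorem 3 of \cite{rio}) applies directly to $\sum_{i=1}^n h_1(X_i,t)$ and yields
\begin{equation*}
\limsup_{n\to\infty}\pm\sqrt{\tfrac{n}{\log\log n}}\,L_n \;=\; \sqrt{2\sigma_1^2}\quad\text{a.s.,}
\end{equation*}
with $\sigma_1^2$ as stated; absolute convergence of the defining series is ensured by Davydov's inequality (Lemma \ref{lem1}).

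For $D_n$: my aim is $E D_n^2 = O(n^{-2}(\log n)^c)$ for some $c\ge 0$. Expanding
\begin{equation*}
E\Bigl(\sum_{1\le i<j\le n} h_2(X_i,X_j,t)\Bigr)^2 \;=\; \sum_{i<j,\,k<l} E[h_2(X_i,X_j,t)h_2(X_k,X_l,t)]
\end{equation*}
and splitting the summation according to the overlap $|\{i,j\}\cap\{k,l\}|\in\{0,1,2\}$, the full-overlap case contributes $O(n^2)$ by boundedness, and the partial- and no-overlap cases are controlled via Lemma \ref{lem7} together with $\sum_m\alpha^{1/2}(m)<\infty$ (which follows from $\beta>2$). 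A dyadic Borel--Cantelli argument along $n_k=2^k$ then yields $D_{n_k}=o(\sqrt{\log\log n_k/n_k})$ a.s., and a Móricz-type maximal inequality applied to the two-parameter partial sums $\sum_{i\le N,\,j\le M}h_2(X_i,X_j,t)$—in the spirit of the blocking used in the proof of Proposition \ref{theo3b}—interpolates to all $n$.

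The main obstacle is the four-distinct-indices contribution to $E(\sum h_2)^2$: a naive use of Lemma \ref{lem7} sums $C\alpha^{1/2}(m)$ over ordered quadruples with $m$ the larger internal gap and produces a bound $O(n^2\sum_m m\,\alpha^{1/2}(m))$, which is not of order $n^2$ for $\beta$ just above $2$. I plan to upgrade this by exploiting the iid identity $E[h_2(X,Y,t)\mid X]=0$ quantitatively: applying Davydov's inequality a second time to decouple the two ``outer'' pairs contributes an additional factor of $\alpha^{1/2}$ depending on the central mixing gap. The resulting product of two $\alpha^{1/2}$-factors, summed over quadruples, then gives the required $O(n^2)$ up to a logarithmic loss, which completes the negligibility of $D_n$.
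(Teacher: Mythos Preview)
Your overall strategy---Hoeffding decomposition, Rio's LIL for the linear part, and a second-moment Borel--Cantelli argument along dyadic $n$ with a maximal inequality for the degenerate part---is exactly the route the paper takes (it simply cites Theorem~2 of Dehling--Wendler \cite{deh2} with Lemma~\ref{lem7} in place of their Lemma~4.2, and that proof follows precisely this pattern).

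However, the ``main obstacle'' you identify is not an obstacle, and your proposed cure for it does not work. You do \emph{not} need $E\bigl(\sum_{i<j}h_2\bigr)^2=O(n^2)$; you only need enough decay so that $\sqrt{n/\log\log n}\,D_n\to 0$ a.s. The naive use of Lemma~\ref{lem7} gives $E\bigl(\sum_{i<j}h_2\bigr)^2\le Cn^2\sum_{m\le n} m\,\alpha^{1/2}(m)$; for $2<\beta\le 4$ this is $O(n^{4-\beta/2})$, hence $ED_n^2=O(n^{-\beta/2})$. Along $n_k=2^k$ one then has $\sum_k P[|D_{n_k}|>\epsilon\sqrt{\log\log n_k/n_k}]\le C\sum_k 2^{k(1-\beta/2)}<\infty$, which is already summable for any $\beta>2$. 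The maximal step fills in the rest. So the naive bound suffices.

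Conversely, your ``upgrade'' does not produce a product of two $\alpha^{1/2}$-factors. Decoupling across the central gap via Davydov gives $\bigl|E[h_2(X_{i_{(1)}},X_{i_{(2)}})h_2(X_{i_{(3)}},X_{i_{(4)}})]-E[h_2(X_{i_{(1)}},X_{i_{(2)}})]\,E[h_2(X_{i_{(3)}},X_{i_{(4)}})]\bigr|\le C\alpha^{1/2}(i_{(3)}-i_{(2)})$, but $E[h_2(X_i,X_j)]$ need not vanish for dependent $X_i,X_j$, so no extra $\alpha^{1/2}$-factor from degeneracy appears there. What Lemma~\ref{lem7} actually exploits is that degeneracy kills the \emph{outer} gaps; you get $\alpha^{1/2}(\max\{i_{(2)}-i_{(1)},i_{(4)}-i_{(3)}\})$, not a product, and that is all one has---and all one needs.
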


\begin{proof}
The proof is the same as the proof of Theorem 2 of Dehling, Wendler \cite{deh2}, where Lemma \ref{lem7} playes the role of Lemma 4.2 of Dehling, Wendler \cite{deh2}, and hence omitted.
\end{proof}

For functionals of absolutely regular sequences, we give not the full law of the iterated logarithm, only a weaker version under much milder conditions than in Dehling, Wendler \cite{deh2}.

\begin{prop}\label{theo4b} Let $\left(X_n\right)$ be an $1$-approximating functional with approximation constants $a_n=O\left(n^{-(\beta+3)}\right)$ for some $\beta>3$ of an absolutely regular process $\left(Z_n\right)_{n\in\Z}$ with mixing coefficients $\beta(n)=O\left(n^{-\beta}\right)$. Let $h:\R\times\R\times\R\rightarrow\R$ be a bounded kernel function which satisfies the varitation condition in $t$. Then
\begin{equation}
\left(U_n(t)-EU_n(t)\right)=O\left(\sqrt{\frac{\log\log n}{n}}\right)\ \ \ \text{a.s.}
\end{equation}
\end{prop}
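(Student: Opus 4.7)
The plan is to apply the Hoeffding decomposition
\begin{equation*}
U_n(t)-EU_n(t)=\frac{2}{n}\sum_{i=1}^{n}h_1(X_i,t)+\frac{2}{n(n-1)}\sum_{1\leq i<j\leq n}h_2(X_i,X_j,t)
\end{equation*}
and control the two pieces separately, showing that the linear part satisfies the law of the iterated logarithm at the claimed rate while the degenerate part is strictly of smaller order almost surely.

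For the linear part, I would first note that $h_1(\cdot,t)$ is bounded (since $h$ is) and, by Lemma \ref{lem9}, satisfies the variation condition with the same constant $L$ as $h$. By Lemma \ref{lem6}, the sequence $(h_1(X_i,t))_{i\in\N}$ is therefore a $1$-approximating functional of the same absolutely regular process $(Z_n)$, with approximation constants $\tilde a_n=O(\sqrt{a_n})=O(n^{-(\beta+3)/2})$. Since $(\beta+3)/2>3$ holds for $\beta>3$, the hypotheses of Proposition \ref{theo3b} apply to the centered sequence $(h_1(X_i,t)-Eh_1(X_1,t))_{i\in\N}$, giving
\begin{equation*}
\frac{2}{n}\sum_{i=1}^{n}h_1(X_i,t)=O\!\left(\sqrt{\frac{\log\log n}{n}}\right)\quad\text{a.s.}
\end{equation*}

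For the degenerate part, write $D_n:=\sum_{1\leq i<j\leq n}h_2(X_i,X_j,t)$ and apply Lemma \ref{lem8} to bound
\begin{equation*}
E[D_n^2]=\sum_{i_1<i_2,\,i_3<i_4}E[h_2(X_{i_1},X_{i_2},t)h_2(X_{i_3},X_{i_4},t)]\leq C\,n^2\sum_{k=0}^{\infty}\!\left(\beta(\lfloor k/3\rfloor)+A_{\lfloor k/3\rfloor}\right),
\end{equation*}
where $A_L=\sqrt{2\sum_{l=L}^{\infty}a_l}=O(L^{-(\beta+1)/2})$ is summable for $\beta>1$, and $\beta(k)=O(k^{-\beta})$ is summable for $\beta>1$. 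Thus $E[D_n^2]=O(n^2)$, so $E\big[(\tfrac{2}{n(n-1)}D_n)^2\big]=O(n^{-2})$. Along the dyadic subsequence $n_l=2^l$, Markov's inequality gives
\begin{equation*}
P\!\left[\tfrac{2}{n_l(n_l-1)}|D_{n_l}|\geq n_l^{-1/2}\right]\leq C/n_l,
\end{equation*}
which is summable, so Borel--Cantelli yields $\tfrac{2}{n_l(n_l-1)}D_{n_l}=o(n_l^{-1/2})$ a.s. To pass from the subsequence to all $n$, I would use a maximal inequality for partial sums in the form of Corollary 1 of M\'oricz \cite{mori}, applied to the degenerate increments (whose fourth moment can be bounded by arguments as in Lemma 2.24 of Borovkova et al. \cite{boro} but for the degenerate kernel $h_2$, using Lemma \ref{lem8} iteratively), which controls $\max_{n_l\leq n<n_{l+1}}|D_n-D_{n_l}|$ by a term summable after division by $n_l$.

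The main obstacle is the interpolation between dyadic indices for the degenerate part: establishing a sharp enough maximal inequality for $\max_{n_l\leq n<n_{l+1}}|D_n|$ that remains compatible with the weak dependence framework (Lemma \ref{lem8} only controls second moments of products of $h_2$, so one either extracts a fourth-moment bound via a further covariance inequality or reduces to telescoping increments). Once this is in place, the degenerate contribution is $o(n^{-1/2})$ a.s., which is negligible compared to $\sqrt{\log\log n/n}$, and the claim follows from the linear part.
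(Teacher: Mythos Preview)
Your overall strategy---Hoeffding decomposition, then Proposition~\ref{theo3b} for the linear part and a smallness argument for the degenerate part---is exactly what the paper does, and your treatment of the linear part via Lemmas~\ref{lem9}, \ref{lem6} and Proposition~\ref{theo3b} is identical to the paper's.

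For the degenerate part there is one genuine mismatch. M\'oricz's Corollary~1 (as used elsewhere in the paper) is a \emph{fourth}-moment maximal inequality, but Lemma~\ref{lem8} only bounds $E[h_2\cdot h_2]$, not $E[h_2\cdot h_2\cdot h_2\cdot h_2]$, so you cannot feed the degenerate increments into M\'oricz without an additional higher-order covariance lemma (which the paper does not provide in this setting). The right tool is the \emph{second}-moment dyadic chaining trick: write $\max_{2^{l-1}\le n<2^l}|D_n|\le\sum_{d=1}^{l}\max_i|D_{2^{l-1}+i2^{d-1}}-D_{2^{l-1}+(i-1)2^{d-1}}|$, apply Cauchy--Schwarz over $d$, and then bound $E[\max_i|\cdot|^2]\le\sum_iE[|\cdot|^2]$. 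This uses only the second-moment input from Lemma~\ref{lem8} and yields $E\bigl[\max_{n\le 2^l}D_n^2\bigr]=O(l^2 2^{2l})$, after which Chebyshev plus Borel--Cantelli along $l$ gives $\tfrac{2}{n(n-1)}D_n=O(n^{-1+\epsilon})$ a.s. This is precisely the device the paper deploys later in the proof of Theorem~\ref{theo2} when it controls $\max_n Q_n$; here the paper avoids redoing it by simply citing Theorem~1 of Dehling--Wendler \cite{deh2} (with Lemma~\ref{lem8} replacing their Lemma~4.3), which packages the same argument and even gives the sharper rate $o\bigl(n^{-1}(\log n)^{3/2}\log\log n\bigr)$.

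One minor correction: in your displayed bound for $E[D_n^2]$ the sum over gaps should carry a factor of $k$, i.e.\ $Cn^2\sum_{k\ge0}k\bigl(\beta(\lfloor k/3\rfloor)+A_{\lfloor k/3\rfloor}\bigr)$, because for fixed maximal outer gap $m$ there are $O(m)$ choices of the other outer gap while the middle gap runs freely; the conclusion $E[D_n^2]=O(n^2)$ remains valid since $\sum_k k\beta(k)<\infty$ and $\sum_k kA_k<\infty$ for $\beta>3$.
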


\begin{proof} We use the Hoeffding decomposition
\begin{equation*}
U_n\left(t\right)-EU_n\left(t\right)=\frac{2}{n}\sum_{i=1}^{n}h_{1}\left(X_{i},t\right)+\frac{2}{n\left(n-1\right)}\sum_{1\leq i<j\leq n}h_{2}\left(X_{i},X_{j},t\right).
\end{equation*}
Note that $h_1$ satsifies the $1$-approximation condition in $t$ by Lemma \ref{lem9} and by Lemma \ref{lem6} $\left(h_1\left(X_n,t\right)\right)_{n\in\N}$ is an $1$-approximating functional of $\left(Z_n\right)_{n\in\Z}$ with approximation constants $C\sqrt{a_n}=O\left(n^{-\frac{\beta+3}{2}}\right)$, so by Proposition \ref{theo3b}
\begin{equation*}
\frac{2}{n}\sum_{i=1}^{n}h_{1}\left(X_{i},t\right)=O\left(\sqrt{\frac{\log\log n}{n}}\right)\ \ \ \text{a.s.}
\end{equation*}
With Lemma \ref{lem8} replacing Lemma 4.3 of Dehling, Wendler \cite{deh2} we can prove in similarly to Theorem 1 of Dehling, Wendler \cite{deh2} that
\begin{equation*}
\frac{2}{n\left(n-1\right)}\sum_{1\leq i<j\leq n}h_{2}\left(X_{i},X_{j},t\right)=o\left(\frac{(\log n)^{\frac{3}{2}}\log\log n}{n}\right)
\end{equation*}
a.s., which completes the proof.
\end{proof}

Borovkova et al. \cite{boro} and Dehling, Wendler \cite{dehl} have established the central limit theorem for $U$-statistics under $p$-continuity, which is a similar assumption to the variation condition. The central limit theorem still holds under the variation condition:

\begin{prop}\label{theo5} Let $h:\R\times\R\times\R\rightarrow\R$ be a bounded kernel function that satisfies the variation condition in $t$ and let one of the following two mixing conditions hold:
\begin{enumerate}
\item Let $\left(X_n\right)_{n\in\N}$ be a strongly mixing sequence with $E\left|X_1\right|<\infty$, and $\alpha(n)=O\left(n^{-\beta}\right)$ for a $\beta>2$.
\item Let $\left(X_n\right)$ be a $1$-approximating functional with approximation constants $a_n=O\left(n^{-(\beta+3)}\right)$ for some $\beta>3$ of an absolutely regular process $\left(Z_n\right)_{n\in\Z}$ with mixing coefficients $\beta(n)=O\left(n^{-\beta}\right)$.
\end{enumerate}
Then
\begin{equation}
 \sqrt{n}\left(U_n\left(t\right)-U(t)\right)\xrightarrow{\mathcal{D}}N\left(0,\sigma^2_1\right)
\end{equation}
with
\begin{equation*}
\sigma^2_1=\var\left[h_1\left(X_1,t\right)\right]+2\sum_{k=2}^\infty\cov\left[h_1\left(X_1,t\right),h_1\left(X_k,t\right)\right].
\end{equation*}
\end{prop}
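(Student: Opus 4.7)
The plan is to run the Hoeffding decomposition
\begin{equation*}
U_n(t) - U(t) = \frac{2}{n}\sum_{i=1}^{n}h_1(X_i,t) + \frac{2}{n(n-1)}\sum_{1\leq i<j\leq n}h_2(X_i,X_j,t),
\end{equation*}
and then handle the linear part by an existing CLT and the degenerate part by a second-moment (Chebyshev) argument, finishing with Slutsky.

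For the linear part $L_n:=\frac{2}{\sqrt n}\sum_{i=1}^n h_1(X_i,t)$, note that $h_1(\cdot,t)$ is bounded (since $h$ is). Under Condition~1, $(h_1(X_n,t))_{n\in\N}$ is a bounded measurable functional of $X_n$ alone, hence strongly mixing with the same coefficients $\alpha(n)=O(n^{-\beta})$, $\beta>2$. Ibragimov's CLT (Theorem~1.4 of Ibragimov~\cite{ibra}) then yields $L_n\xrightarrow{\mathcal D}N(0,4\sigma_1^2/4)=N(0,\sigma_1^2)$ after identifying the limiting variance. Under Condition~2, Lemma~\ref{lem9} shows that $h_1(\cdot,t)$ inherits the variation condition (with the same $L$), and then Lemma~\ref{lem6} gives that $(h_1(X_n,t))_{n\in\N}$ is a bounded $1$-approximating functional of $(Z_n)_{n\in\Z}$ with approximation constants $C\sqrt{a_n}=O(n^{-(\beta+3)/2})$. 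Since $\beta+3>6$, the decay is summable at the rates required by the CLT of Borovkova, Burton, Dehling (Theorem~4 of \cite{boro}), which delivers $L_n\xrightarrow{\mathcal D}N(0,\sigma_1^2)$.

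For the degenerate part $D_n:=\sqrt n\cdot\frac{2}{n(n-1)}\sum_{i<j}h_2(X_i,X_j,t)$, I would bound $E[D_n^2]$ by expanding and counting. We get
\begin{equation*}
E[D_n^2]\leq \frac{C}{n^3}\sum_{(i_1,i_2,i_3,i_4)}\bigl|E[h_2(X_{i_1},X_{i_2},t)h_2(X_{i_3},X_{i_4},t)]\bigr|,
\end{equation*}
where the sum runs over $i_1<i_2$, $i_3<i_4$. Writing the ordered indices $i_{(1)}\leq i_{(2)}\leq i_{(3)}\leq i_{(4)}$ and setting $m=\max\{i_{(2)}-i_{(1)},i_{(4)}-i_{(3)}\}$, the number of such tuples with prescribed $m$ is $O(n^2m)$ (free location of $i_{(1)}$, free middle gap, and $O(m)$ choices for the pair of small gaps). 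Under Condition~1, Lemma~\ref{lem7} gives $|E[h_2h_2]|\leq C\alpha^{1/2}(m)=O(m^{-\beta/2})$, whence
\begin{equation*}
\sum_{m\geq 0}n^2 m\cdot m^{-\beta/2}=O\bigl(n^{4-\beta/2}\bigr)=o(n^3)\quad\text{for }\beta>2,
\end{equation*}
so $E[D_n^2]\to 0$. Under Condition~2, Lemma~\ref{lem8} gives $|E[h_2h_2]|\leq C(\beta(\lfloor m/3\rfloor)+A_{\lfloor m/3\rfloor})=O(m^{-(\beta+2)/2})$, and then $\sum_m n^2 m\cdot m^{-(\beta+2)/2}=O(n^2)=o(n^3)$ since $\beta>3$. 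In either case $D_n\to 0$ in $L^2$, hence in probability, and Slutsky's theorem combined with the linear part's CLT yields the stated convergence.

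The only real obstacles are technical: (i) verifying that the existing CLTs (Ibragimov; Borovkova et al.) actually apply to the transformed sequence $(h_1(X_n,t))_n$ rather than to $(X_n)_n$, which is precisely why Lemmas~\ref{lem9} and \ref{lem6} are needed under Condition~2; and (ii) the combinatorial tuple-counting behind the variance bound on $D_n$, which is routine but must be done carefully so that the exponent on $n$ is strictly less than $3$. Both are handled by the preliminary lemmas already in place, so the argument amounts to assembling them.
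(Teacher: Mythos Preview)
Your proposal is correct and follows essentially the same route as the paper: the paper's proof simply defers to Theorem~1.8 of Dehling--Wendler~\cite{dehl} (under Condition~1) and Theorem~7 of Borovkova et al.~\cite{boro} (under Condition~2), in each case substituting the covariance inequalities Lemma~\ref{lem7} respectively Lemma~\ref{lem8} for the corresponding lemmas in those references---and those referenced proofs do exactly what you spell out, namely Hoeffding decomposition, a partial-sum CLT for the linear part, and an $L^2$ bound on the degenerate part via the generalized covariance inequalities. Your explicit use of Lemmas~\ref{lem9} and~\ref{lem6} to transfer the $1$-approximation property to $(h_1(X_n,t))_n$ under Condition~2 is precisely the step that makes the partial-sum CLT of~\cite{boro} applicable, and your tuple-counting for the degenerate part is the standard Yoshihara-type argument.
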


\begin{proof} Under Condition 1. the proof is the same as for Theorem 1.8 of Dehling, Wendler \cite{dehl} with our Lemma \ref{lem7} replacing their Lemma 3.3. Under Condition 2., we replace Lemma 4.3 of Borovkova et al. \cite{boro} by our Lemma \ref{lem8} in the proof of their Theorem 7.
\end{proof}

\section{Proof of Main results}

\subsection{Sample Quantiles}

In the proofs, $C$ denotes an arbitrary constant, which may have different values from line to line and may depend on several other values, but not on $n\in\N$.

\begin{proof}[Proof of Theorem \ref{theo1}] Let $c_n=n^{-\frac{5}{8}-\frac{1}{8}\gamma}(\log n)^{\frac{3}{4}}(\log\log n)^{\frac{1}{2}}$. We first prove that
\begin{multline*}
\sum_{l=0}^{\infty}P\left[\max_{2^l\leq n<2^{l+1}}\frac{1}{c_n}\sup_{|t-t_p|\leq C\sqrt{\frac{\log l}{2^l}}}\left(F_n\left(t\right)-F_n\left(t_p\right)-F\left(t\right)+F\left(t_p\right)\right)>\epsilon\right]\\
\leq C\sum_{l=0}^{\infty}\frac{1}{c_{2^l}^4}E\left(\max_{2^l\leq n<2^{l+1}}\sup_{|t-t_p|\leq C\sqrt{\frac{\log l}{2^l}}}\left(F_n\left(t\right)-F_n\left(t_p\right)-F\left(t\right)+F\left(t_p\right)\right)\right)^4<\infty.
\end{multline*}
Line (\ref{line10}) will follow by the Borel-Cantelli lemma. We set $d_{2^l}=\left(\frac{\log l}{2^l}\right)^{\frac{3}{4}}$ and $d_n=d_{2^l}$ for $2^l\leq n<2^{l+1}$. Let $k\in\Z$. As $F_n$, $F$ are nondecreasing in $t$, we have for any $t\in\left[t_p+kd_n,t_p+(k+1)d_n\right]$ that
\begin{align*}
&\left|F_n\left(t\right)-F_n\left(t_p\right)-F\left(t\right)+F\left(t_p\right)\right|\\
\leq&\max\left\{\left|F_n\left(t_p+kd_n\right)-F_n\left(t_p\right)-F\left(t\right)+F\left(t_p\right)\right|\right.,\\
&\quad\left.\left|F_n\left(t_p+(k+1)d_n\right)-F_n\left(t_p\right)-F\left(t_p\right)+F\left(t_p\right)\right|\right\}\displaybreak[0]\\
\leq&\max\left\{\left|F_n\left(t_p+kd_n\right)-F_n\left(t_p\right)-F\left(t_p+kd_n\right)+F\left(t_p\right)\right|\right.,\\
&\quad\left.\left|F_n\left(t_p+(k+1)d_n\right)-F_n\left(t_p\right)-F\left(t_p+(k+1)d_n\right)+F\left(t_p\right)\right|\right\}\\
&+\left|F\left(t_p+(k+1)d_n\right)-F\left(t_p+kd_n\right)\right|.
\end{align*}
It follows that
\begin{align*}
&\sup_{|t-t_p|\leq C\sqrt{\frac{\log l}{2^l}}}\left(F_n\left(t\right)-F_n\left(t_p\right)-F\left(t\right)+F\left(t_p\right)\right)\\
\leq&\max_{\left|k\right|\leq C\left(2^l\log l\right)^{\frac{1}{4}}}\left(F_n\left(t_p+d_nk\right)-F_n\left(t_p\right)-F\left(t_p+d_nk\right)+F\left(t_p\right)\right)\\
&+\max_{\left|k\right|\leq C\left(2^l\log l\right)^{\frac{1}{4}}}\left|F\left(t_p+(k+1)d_n\right)-F\left(t_p+kd_n\right)\right|.
\end{align*}
From condition (\ref{line9}), we conclude that
\begin{equation*}
\max_{\left|k\right|\leq C\left(2^l\log l\right)^{\frac{1}{4}}}\left|F\left(t_p+(k+1)d_n\right)-F\left(t_p+kd_n\right)\right|\leq
f(t_p)d_n+o\left(\left(\sqrt{\frac{\log l}{2^l}}\right)^{\frac{3}{2}}\right)=o\left(c_n\right).
\end{equation*}
Furthermore, we have that for all $k_1,k_2\leq C\left(2^l\log l\right)^{\frac{1}{4}}$
\begin{equation*}
 \left|F\left(t_p+d_nk_1\right)-F\left(t_p+d_nk_2\right)\right|=f\left(t_p\right)\left|k_1-k_2\right|d_n+o\left(\sqrt{\frac{\log l}{2^l}}^{\frac{3}{2}}\right)\leq C\left|k_1-k_2\right|d_n.
\end{equation*}
So by Lemma \ref{lem4} (under mixing Condition 1.) or Lemma \ref{lem5} (under mixing Condition 2.)
\begin{multline*}
E\left(F_n\left(t_p+d_nk_1\right)-F_n\left(t_p+d_nk_2\right)-F\left(t_p+d_nk_1\right)+F\left(t_p+d_nk_2\right)\right)^4\\
\leq C\frac{1}{n^2}\left(\log n\right)^2\left|k_1-k_2\right|^{1+\gamma}d_n^{1+\gamma}.
\end{multline*}
Note that we can represent the differences of the empirical distribution function as a double sum
\begin{multline*}
F_n\left(t_p+d_nk\right)-F_n\left(t_p\right)-F\left(t_p+d_nk\right)+F\left(t_p\right)\\
=\sum_{i=1}^{n}\sum_{j=1}^k\left(g(X_i,t_p+jd_n)-g(X_i,t_p+(j-1)d_n)-F(t_p+jd_n)+F(t_p+(j-1)d_n)\right),
\end{multline*}
so by Corollary 1 of M\'oricz \cite{mori}, it then follows that
\begin{multline*}
\frac{1}{c_{2^l}^4}E\left(\max_{2^l\leq n<2^{l+1}}\max_{\left|k\right|\leq C\left(2^l\log l\right)^{\frac{1}{4}}}\left(F_n\left(t_p+d_nk\right)-F_n\left(t_p\right)-F\left(t_p+d_nk\right)+F\left(t_p\right)\right)\right)^4\\
\shoveleft\leq C\frac{1}{c_{2^l}^4}E\left(F_n\left(t_p+C\sqrt{\frac{\log \log n}{n}}\right)-F_n\left(t_p-C\sqrt{\frac{\log \log n}{n}}\right)\right.\\
\left.-F\left(t_p+C\sqrt{\frac{\log \log n}{n}}\right)+F\left(t_p-C\sqrt{\frac{\log\log n}{n}}\right)\right)^4\\
\shoveleft\leq C\frac{2^{\frac{5+\gamma}{2}l}}{l^{3}\left(\log l\right)^{2}}\frac{l^2}{2^{2l}}\frac{\left(\log l\right)^{\frac{1+\gamma}{2}}}{2^{\frac{1+\gamma}{2}l}}=C\frac{1}{l\left(\log l\right)^{\frac{3-\gamma}{2}}}.\\
\end{multline*}
As $\gamma<1$, this quantities are summable and Line (\ref{line10}) is proved.

To prove Line (\ref{line11}), let w.l.o.g. $f\left(t_p\right)=1$, otherwise replace $g\left(x,t\right)$ by $g\left(x,\frac{t}{f(t_p)}\right)$. We represent $R_n$ as $Z_n\left(F\left(t_p\right)-F_n\left(t_p\right)\right)$ with
\begin{equation*}
 Z_n\left(x\right):=\left(F_n\left(\cdot+t_p\right)-F_n\left(t_p\right)\right)^{-1}\left(x\right)-x=F_n^{-1}\left(x+F_n\left(t_p\right)\right)-x-t_p.
\end{equation*}
By Theorem 3 of Rio \cite{rio} respectively Proposition \ref{theo3b} a.s.
\begin{equation*}
 \limsup_{n\rightarrow\infty}\pm\sqrt{\frac{n}{\log\log n}}\left(F_n\left(t_p\right)-F\left(t_p\right)\right)\leq C.
\end{equation*}
By Line (\ref{line10}) and Condition (\ref{line9})
\begin{align*}
&\sup_{\left|x\right|\leq C\sqrt{\frac{\log\log n}{n}}}\left|F_n\left(x+t_p\right)-F_n\left(t_p\right)-x\right|\\
=&\sup_{\left|x\right|\leq C\sqrt{\frac{\log\log n}{n}}}\left|F_n\left(x+t_p\right)-F\left(x+t_p\right)-F_n\left(t_p\right)+F\left(t_p\right)\right|\\
&+\sup_{\left|x\right|\leq C\sqrt{\frac{\log\log n}{n}}}\left|F\left(x+t_p\right)-F\left(t_p\right)-x\right|=o\left(c_n\right)\ \ \ \text{a.s.}
\end{align*}
Then by Theorem 1 of Vervaat \cite{verv}
\begin{equation*}
\sup_{\left|x\right|\leq C\sqrt{\frac{\log\log n}{n}}}\left|Z_n\left(x\right)\right|=o\left(c_n\right)\ \ \ \text{a.s.},
\end{equation*}
(Vervaats theorem is for random functions from $[0,\infty)$ to $[0,\infty)$, but it becomes clear from the proof of his Lemma 1 that it also holds for the intervalls $[-C\sqrt{\frac{\log\log n}{n}},C\sqrt{\frac{\log\log n}{n}}]$). Hence $R_n=Z_n\left(F\left(t_p\right)-F_n\left(t_p\right)\right)=o\left(c_n\right)$ a.s.
\end{proof}

\subsection{$U$-Quantiles}

\begin{proof}[Proof of Theorem \ref{theo2}] To prove Line (\ref{line16}), we use the Hoeffding decomposition
\begin{equation*}
U_n\left(t\right)=U\left(t\right)+\frac{2}{n}\sum_{i=1}^{n}h_{1}\left(X_{i},t\right)+\frac{2}{n\left(n-1\right)}\sum_{1\leq i<j\leq n}h_{2}\left(X_{i},X_{j},t\right).
\end{equation*}
As above, we set $c_n=n^{-\frac{5}{8}-\frac{1}{8}\gamma}(\log n)^{\frac{3}{4}}(\log\log n)^{\frac{1}{2}}$ and $d_n=\left(\frac{\log\log n}{n}\right)^{\frac{3}{4}}$ and get
\begin{align*}
&\sup_{|t-t_p|\leq C\sqrt{\frac{\log l}{2^l}}}\left|U_n\left(t\right)-U_n\left(t_p\right)-U\left(t\right)+U\left(t_p\right)\right|\\
\leq&\max_{\left|k\right|\leq C\left(2^l\log l\right)^{\frac{1}{4}}}\left|U_n\left(t_p+d_nk\right)-U_n\left(t_p\right)-U\left(t_p+d_nk\right)+U\left(t_p\right)\right|\\
&+\max_{\left|k\right|\leq C\left(2^l\log l\right)^{\frac{1}{4}}}\left|U\left(t_p+d_n(k+1)\right)-U\left(t_p+d_nk\right)\right|
\end{align*}
and
\begin{equation*}
\max_{\left|k\right|\leq C\left(2^l\log l\right)^{\frac{1}{4}}}\left|U\left(t_p+d_n(k+1)\right)-U\left(t_p+d_nk\right)\right|=o\left(c_n\right).
\end{equation*}

By Lemma \ref{lem9} we have that $h_1$ satisfies the variation condition uniformly in some neighbourhood of $t_p$. Applying Theorem \ref{theo1} to the function $g=h_1$, we obtain
\begin{multline*}
\max_{\left|k\right|\leq C\left(2^l\log l\right)^{\frac{1}{4}}}\left|\frac{2}{n}\sum_{i=1}^{n}h_{1}\left(X_{i},t_p+kd_n\right)-\frac{2}{n}\sum_{i=1}^{n}h_{1}\left(X_{i},t_p\right)-U\left(t_p+d_nk\right)+U\left(t_p\right)\right|\\
=o\left(c_n\right)
\end{multline*}
a.s. It remains to show that
\begin{equation}\label{line31}
\max_{\left|k\right|\leq C\left(2^l\log l\right)^{\frac{1}{4}}}\left|Q_n\left(t_p+d_nk\right)-Q_n\left(t_p\right)\right|=o\left(n^2c_n\right)
\end{equation}
a.s. with $Q_n\left(t\right):=\sum_{1\leq i<j\leq n}h_{2}\left(X_{i},X_{j},t\right)$. We first consider Condition 1. (strong mixing) and concentrate on the case $\beta<4$. In the case $\beta\geq 4$, a similar calculation can be done. Recall that for any random variables $Y_1,\ldots,Y_m$: $E\left(\max_{i=1,\ldots,m}|Y_i|\right)^2\leq\sum_{i=1}^mEY_i^2$ and therefore
\begin{multline*}
 E\left(\max_{2^{l-1}\leq n<2^l}\max_{\left|k\right|\leq C\left(2^l\log l\right)^{\frac{1}{4}}}\frac{1}{2^{l-1}c_n}\left|Q_n\left(t_p+d_nk\right)-Q_n\left(t_p\right)\right|\right)^2\\
\leq \frac{1}{2^{2(l-1)}c_{2^l}^2}E\left(\max_{\left|k\right|\leq C\left(2^l\log l\right)^{\frac{1}{4}}}\sum_{d=1}^{l}\max_{i=1,\ldots,2^{l-d}}\left(Q_{2^{(l-1)}+i2^{(d-1)}}\left(t_p+d_nk\right)-Q_{2^{(l-1)}+i2^{(d-1)}}\left(t_p\right)\right)\right)^2\displaybreak[0]\\
\leq \frac{1}{2^{2(l-1)}c_{2^l}^2}\sum_{\left|k\right|\leq C\left(2^l\log l\right)^{\frac{1}{4}}}l\sum_{d=1}^{l}\sum_{i=1}^{2^{l-d}}E\left(Q_{2^{(l-1)}+i2^{(d-1)}}\left(t_p+d_nk\right)-Q_{2^{(l-1)}+i2^{(d-1)}}\left(t_p\right)\right)^2\displaybreak[0]\\
\shoveleft\leq\frac{1}{2^{2(l-1)}c_{2^l}^2}\sum_{\left|k\right|\leq C\left(2^l\log l\right)^{\frac{1}{4}}}l^2\sum_{i_1,i_2,i_3,i_4=1}^{2^l}\\
\left|E\left|\left(h_2\left(X_{i_1},X_{i_2},t_p+d_nk\right)-h_2\left(X_{i_1},X_{i_2},t_p\right)\right)\left(h_2\left(X_{i_3},X_{i_4},t_p+d_nk\right)-h_2\left(X_{i_3},X_{i_4},t_p\right)\right)\right]\right|,
\end{multline*}
where we used the triangular inequality in the last step. By means of Lemma \ref{lem7} and the same arguments as in the proof of Lemma 2 of Yoshihara \cite{yosh}, we arrive at
\begin{multline*}
 E\left(\max_{2^{l-1}\leq n<2^l}\max_{\left|k\right|\leq C\left(2^l\log l\right)^{\frac{1}{4}}}\frac{1}{2^{l-1}c_n}\left|Q_n\left(t_p+d_nk\right)-Q_n\left(t_p\right)\right|\right)^2\\
\leq \frac{C}{2^{4l}c_{2^l}^2}\left(\frac{2^l}{\log l}\right)^{\frac{1}{4}}l^2 2^{2l}\sum_{i=1}^{2^l}i\alpha^{\frac{1}{2}}\left(i\right)\leq \frac{C2^{l(\frac{3}{2}+\frac{1}{4}\gamma)}}{2^{4l}l^{\frac{3}{2}}(\log l)^{\frac{5}{4}}}l^2 2^{l(4-\frac{\beta}{2})}=C\frac{2^{l(\frac{3}{2}+\frac{1}{4}\gamma-\frac{1}{2}\beta)}l^{\frac{1}{2}}}{(\log l)^{\frac{5}{4}}}.
\end{multline*}
As $\beta>\frac{7}{2}$, we have that $\frac{3}{2}+\frac{1}{4}\gamma-\frac{1}{2}\beta=\frac{-2\beta^2+7\beta-2}{4\beta}<0$ and thus the second moments are summable. Line (\ref{line31}) follows by the Chebyshev inequality and the Borel-Cantelli lemma, so Line (\ref{line16}) is proved.

Under Condition 2. (functionals of absolutely regular sequences), we have by Lemma \ref{lem8} and $\sum_{i=1}^{\infty}i\beta(i)<\infty$, $\sum_{i=1}^{\infty}iA_i<\infty$
\begin{multline*}
E\left(\max_{2^{l-1}\leq n<2^l}\max_{\left|k\right|\leq C\left(2^l\log l\right)^{\frac{1}{4}}}\frac{1}{2^{l-1}c_n}\left|Q_n\left(t_p+d_nk\right)-Q_n\left(t_p\right)\right|\right)^2\\
\leq \frac{C}{2^{4l}c_n^2}\left(\frac{2^l}{\log l}\right)^{\frac{1}{4}}l^2 2^{2l}\sum_{i=1}^{2^l}i\left(\beta(\frac{i}{3})+A_{\frac{i}{3}}\right)\leq \frac{C2^{l(\frac{3}{2}+\frac{1}{4}\gamma)}}{2^{4l}l^{\frac{3}{2}}(\log l)^{\frac{5}{4}}}l^2 2^{2l}=\frac{Cl^{\frac{1}{2}}}{2^{l(\frac{1}{2}-\frac{1}{4}\gamma)}(\log l)^{\frac{5}{4}}}.
\end{multline*}
Since $\gamma\in(0,1)$, we have that $\frac{1}{2}-\frac{1}{4}\gamma>0$ and the second moments are summable. Line (\ref{line31}) follows by the Chebyshev inequality and the Borel-Cantelli lemma, so Line (\ref{line16}) is proved.

To prove Line (\ref{line17}), let w.l.o.g. $u\left(t_p\right)=1$, otherwise replacing $h(x,y,t)$ by $h\left(x,y,\frac{t}{u(t_p)}\right)$. We represent $R'_n$ as $Z'_n\left(U\left(t_p\right)-U_n\left(t_p\right)\right)$ with
\begin{equation*}
 Z'_n\left(x\right):=\left(U_n\left(\cdot+t_p\right)-U_n\left(t_p\right)\right)^{-1}\left(x\right)-x=U_n^{-1}\left(x+U_n\left(t_p\right)\right)-x-t_p.
\end{equation*}
By Proposition \ref{theo4}
\begin{equation*}
 \limsup_{n\rightarrow\infty}\pm\sqrt{\frac{n}{\log\log n}}\left(U_n\left(t_p\right)-U\left(t_p\right)\right)=C.
\end{equation*}
By Line (\ref{line16}) and Condition (\ref{line15})
\begin{multline*}
\sup_{\left|x\right|\leq C\sqrt{\frac{\log\log n}{n}}}\left|U_n\left(x+t_p\right)-U_n\left(t_p\right)-x\right|\\
\leq \sup_{\left|x\right|\leq C\sqrt{\frac{\log\log n}{n}}}\left|U_n\left(x+t_p\right)-U\left(x+t_p\right)-U_n\left(t_p\right)+U\left(t_p\right)\right|\\
+\sup_{\left|x\right|\leq C\sqrt{\frac{\log\log n}{n}}}\left|U\left(x+t_p\right)-U\left(t_p\right)-x\right|=o\left(c_n\right).
\end{multline*}
Then by Theorem 1 of Vervaat \cite{verv}
\begin{equation*}
\left|R_n'\right|\leq\sup_{\left|x\right|\leq C\sqrt{\frac{\log\log n}{n}}}\left|Z'_n\left(x\right)\right|=o\left(c_n\right),
\end{equation*}
so Line (\ref{line17}) is proved.
\end{proof}

\section*{Acknowledgements}
I want to thank Herold Dehling, who proposed studying this topic and discussed it with me many times.

\small{

}
\end{document}